\newcommand{\bx}{{\bf x} }
\newcommand{\p}{\partial}
\newcommand{\eps}{\varepsilon}
\newtheorem{rmk}{Remark}[section]
\newcommand{\be}{\begin{equation}}
\newcommand{\ee}{\end{equation}}
\newcommand{\ba}{\begin{array}}
\newcommand{\ea}{\end{array}}
\newcommand{\bea}{\begin{eqnarray}}
\newcommand{\eea}{\end{eqnarray}}
\newcommand{\beas}{\begin{eqnarray*}}
\newcommand{\eeas}{\end{eqnarray*}}
\title{Uniform error bounds of time-splitting methods for the nonlinear Dirac equation in the nonrelativistic regime without magnetic potential\thanks{This work was partially supported by the Ministry of Education of Singapore grant R-146-000-290-114 (W. Bao and J. Yin) and  NSFC grants 11771036 and 91630204 (Y. Cai).}}
\author{Weizhu Bao\thanks{Department of Mathematics, National University of Singapore, Singapore
		119076 ({\tt matbaowz@nus.edu.sg},
		URL: http://blog.nus.edu.sg/matbwz/)}
	\and Yongyong Cai\thanks{Corresponding author. Laboratory of Mathematics and Complex Systems (Ministry of Education), School of Mathematical Sciences, Beijing Normal University, Beijing 100875, P. R. China, and Beijing Computational Science Research Center,
		Beijing 100193, P. R. China ({\tt yongyong.cai@bnu.edu.cn})}
	\and Jia Yin\thanks{NUS Graduate School for Integrative Sciences and Engineering (NGS),
		National University of Singapore, Singapore 117456 ({\tt yinjia15@u.nus.edu})}}
\begin{document}

\maketitle

\begin{abstract}
Super-resolution of the Lie-Trotter splitting ($S_1$) and Strang splitting ($S_2$)  is rigorously analyzed for the nonlinear Dirac equation without external magnetic potentials in the nonrelativistic regime with a small parameter $0<\eps\leq 1$ inversely proportional to the speed of light.  In this regime, the solution highly oscillates in time with   wavelength at $O(\eps^2)$. The splitting methods surprisingly show super-resolution, i.e. the methods can capture the solution accurately even if the time step size $\tau$ is much larger than the sampled wavelength at $O(\eps^2)$. Similar to the linear case, $S_1$ and $S_2$ both exhibit $1/2$ order convergence uniformly with respect to $\eps$. Moreover, if $\tau$ is non-resonant, i.e. $\tau$ is away from certain region determined by $\eps$, $S_1$ would yield an improved uniform first order $O(\tau)$ error bound, while $S_2$ would give  improved uniform $3/2$ order convergence. Numerical results are reported to confirm these rigorous results. Furthermore, we note that super-resolution is still valid for higher order splitting methods.
\end{abstract}


\begin{keywords}
nonlinear Dirac equation, super-resolution, nonrelativistic regime, time-splitting, uniform error bound
\end{keywords}

\section{Introduction}\setcounter{equation}{0}

The splitting methods form an important group of methods which are quite accurate and efficient \cite{MQ}. Actually, they have been widely applied for dealing with highly oscillatory systems such as the Schr\"{o}dinger/nonlinear Schr\"{o}dinger equations \cite{ABB, BJM, BJM2, Carles, CG,Lubich,Tha}, the Dirac/nonlinear Dirac equations \cite{BCJT2, BCJY, BY, LLS}, the Maxwell-Dirac system \cite{BL, HJMSZ}, the Zakharov system \cite{BS2, BSW, Gauckler, JMZ}, the Gross-Pitaevskii equation for Bose-Einstein condensation (BEC) \cite{BS}, the Stokes equation \cite{CHP}, and the Enrenfest dynamics \cite{FJS}, etc.

In this paper, we consider the splitting methods applied to the nonlinear Dirac equation (NLDE) \cite{Dirac, Dirac2,FLR, FS, Heis,CEL, HC, HWC,FLW, FW, FW2, FW3, Sah,ST2006,XST} in the nonrelativistic regime without magnetic potential.  In one or two dimensions (1D or 2D), the equation can be represented in the  two-component form with wave function $\Phi := \Phi(t, \bx) = (\phi_1(t, \bx), \phi_2(t, \bx))^T\in \mathbb{C}^2$ \cite{BCJY}:
\be\label{eq:NLDirac1d2d}
i\partial_t\Phi = \left(-\frac{i}{\eps}\sum_{j=1}^d\sigma_j\partial_j + \frac{1}{\eps^2}\sigma_3\right)\Phi + V(\bx)\Phi + \mathbf{F}(\Phi)\Phi, \quad \bx\in\mathbb{R}^d, \quad d = 1, 2, \quad t>0,
\ee
where $i=\sqrt{-1}$ is the imaginary unit, $t$ is time, $\bx=(x_1, ..., x_d)^T$, $\partial_j=\frac{\partial}{\partial x_j}$ ($j=1,...,d$), $\eps\in(0, 1]$ is a dimensionless parameter inversely proportional to the speed of light, and $V:=V(\bx)$ is a real-valued function denoting the external electric potential. $\sigma_1$, $\sigma_2$, $\sigma_3$ are the Pauli matrices defined as
\be\label{Paulim}
\sigma_{1}=\left(
\begin{array}{cc}
	0 & 1  \\
	1 & 0  \\
\end{array}
\right), \qquad
\sigma_{2}=\left(
\begin{array}{cc}
	0 & -i \\
	i & 0 \\
\end{array}
\right),\qquad
\sigma_{3}=\left(
\begin{array}{cc}
	1 & 0 \\
	0 & -1 \\
\end{array}
\right).
\ee
The nonlinearity $\mathbf{F}(\Phi)$ in \eqref{eq:NLDirac1d2d} is usually taken as
\be\label{eq:F}
\mathbf{F}(\Phi) = \lambda_1(\Phi^*\sigma_3\Phi)\sigma_3 + \lambda_2|\Phi|^2I_2,
\ee
with $|\Phi|^2 = \Phi^*\Phi$, where $\lambda_1$, $\lambda_2\in\mathbb{R}$ are two given real constants,  $\Phi^*=\overline{\Phi}^T$ is the complex conjugate transpose of $\Phi$ and
$I_2$ is the $2\times 2$ identity matrix.
The above choice of nonlinearity is motivated from the so-called Soler model in quantum field theory, e.g. $\lambda_2=0$ and $\lambda_1\ne0$ \cite{FLR,FS,Thirring}, and BEC with a chiral confinement and/or spin-orbit coupling, e.g. $\lambda_1=0$ and $\lambda_2\ne0$ \cite{CEL,HC,HWC}.
In order to study the dynamics, the initial data is chosen as
\be\label{eq:initial}
\Phi(t = 0, \bx) = \Phi_0(\bx), \quad \bx \in\mathbb{R}^d, \quad d = 1, 2.
\ee


When  $\eps = 1$ in \eqref{eq:NLDirac1d2d}, which corresponds to the classical regime of the nonlinear Dirac equation, there have been comprehensive analytical and numerical results in the literatures. In the analytical aspect, for the existence and multiplicity of bound states and/or standing wave solutions, we refer to \cite{Bala,Bala1,Bar,Caz,Dol,Esteban0,Esteban,Komech} and references therein. Particularly, for the case where $d=1$, $V(x)\equiv0$,  $\lambda_1=-1$ and $\lambda_2=0$ in the choice of $\mathbf{F}(\Phi)$, the NLDE \eqref{eq:NLDirac1d2d} admits explicit soliton solutions \cite{CKMS,FS,Hag,Kor,Mat2,Raf,Stu,Tak}. In the numerical aspect, many accurate and efficient numerical methods have been proposed and analyzed,  such as the finite difference time domain (FDTD) methods \cite{BHM,Hamm,NSG}, the time-splitting Fourier spectral (TSFP) methods \cite{BL,BZ,FSS,HJMSZ} and the Runge-Kutta discontinuous Galerkin methods \cite{HL}.

On the other hand, when $0<\eps\ll 1$ (the nonrelativistic regime where the wave speed is much smaller than the speed of light),  as indicated by previous analysis in \cite{BCJY, Foldy, N, CW}, the wavelength of the solution in time is at $O(\eps^2)$. The oscillation of the solution as well as the unbounded and indefinite energy functional w.r.t. $\eps$ \cite{BMP, Esteban}  cause much burden in the analysis and computation. Indeed, it would require that the time step size $\tau$ to be strictly reliant on $\eps$ to capture the exact solution. Numerical studies in \cite{BCJY} have confirmed this dependence. The error bounds show that $\tau = O(\eps^3)$ is required for the conservative Crank-Nicolson finite difference (CNFD) method \cite{BCJY}, and $\tau = O(\eps^2)$ is required for the exponential wave integrator Fourier pseudospectral (EWI-FP) method as well as the time-splitting Fourier pseudospectral (TSFP) method \cite{BCJY}. To overcome the restriction, recently, uniform accurate (UA) schemes with two-scale formulation approach \cite{LMZ} or multiscale time integrator pseudospectral method \cite{BCJT,CW2} or nested Picard iterative integrators \cite{CW1} have been designed for the NLDE in the nonrelativistic regime, where the time step size $\tau$ could be independent of $\eps$.

Though the error of the TSFP method (also called $S_2$ later in this paper) has a $\tau^2/\eps^4$ dependence on the small parameter $\eps$ \cite{BCJY}, under the specific case where there is a lack of magnetic potential, as in \eqref{eq:NLDirac1d2d}, we find out through our recent extensive numerical experiments that the error of $S_2$ is independent of $\eps$ and uniform w.r.t. $\eps$. In other words, $S_2$ for the NLDE \eqref{eq:NLDirac1d2d} in the absence of magnetic potentials displays \textbf{super-resolution} w.r.t. $\eps$.


The \textbf{super-resolution} here suggests independence of the oscillation wavelength. It is even stronger than the `super-resolution' in \cite{DT} for the Schr\"{o}dinger equation in the semiclassical regime, where the restriction on the time steps is still related to the wavelength, but not so strict as the resolution of the oscillation by fixed number of points per wavelength. This property for the time-splitting methods makes them superior in solving the NLDE in the absence of magnetic potentials in the nonrelativistic regime as they are more efficient and reliable as well as simple compared to other numerical methods in the literature. In this paper, the super-resolution for the first-order ($S_1$) and second-order ($S_2$) time-splitting methods will be rigorously analyzed, and numerical results will be presented to validate the conclusions. We remark that similar results have been analyzed for the Dirac equation \cite{BCY}, where the linearity enables us to explicitly track the error exactly and make estimation at the target time step without using Gronwall type arguments. However, in the nonlinear case, it is impossible to follow the error propagation exactly and estimations have to be  done at each time step. As a result, Gronwall arguments will be involved together with the mathematical induction  to control the nonlinearity and to bound the numerical solution. In particular,   instead of the previously adopted Lie calculus approach \cite{Lubich}, Taylor expansion  and Duhamel principle are employed to study the local error of the splitting methods, which can identify how temporal oscillations propagate numerically. In other words, the techniques adopted to establish uniform error bounds of the time-splitting methods for the NLDE are completely different with those used for the Dirac equation \cite{BCY}.

The rest of the paper is organized as follows. In section 2, we establish uniform error estimates of the first-order time-splitting method for the NLDE without magnetic potentials in the nonrelativistic regime and
report numerical results to confirm our uniform error bounds.
Similar results are presented for the second-order time-splitting method
in section 3 with a remark on extension to higher order splitting methods.
Some conclusions are drawn in section 4.
Throughout the paper, we adopt the standard Sobolev spaces  and the corresponding norms. Meanwhile, $A \lesssim B$ is used in the sense that there exists a generic constant $C > 0$ independent of $\eps$ and $\tau$,
such that $|A|\le C\,B$. $A \lesssim_\delta B$ has a similar meaning that there exists a generic constant $C_\delta>0$ dependent on $\delta$ but independent of $\eps$ and $\tau$, such that $|A|\le C_\delta\,B$.

\section{Uniform error bounds of the first-order Lie-Trotter splitting method}\setcounter{equation}{0}
For simplicity of notations and without loss of generality, here we only consider \eqref{eq:NLDirac1d2d} in 1D ($d = 1$). Extensions to \eqref{eq:NLDirac1d2d} in 2D and/or  the four component form of
the NLDE 
 with $d = 1, 2, 3$ \cite{BCJY} are straightforward.

Denote the free Dirac Hermitian operator
\be
Q^\eps = -i\eps\sigma_1\partial_x + \sigma_3, \quad x\in\mathbb{R},
\ee
then the NLDE \eqref{eq:NLDirac1d2d} in 1D can be written as
\be\label{eq:NLDirac_nonmag}
i\partial_t\Phi(t, x) = \frac{1}{\eps^2}Q^\eps\Phi(t, x) + V(x)\Phi(t, x) + \mathbf{F}(\Phi(t, x))\Phi(t, x), \quad x\in\mathbb{R},
\ee
with nonlinearity \eqref{eq:F} and the initial condition \eqref{eq:initial}.

Choose $\tau > 0$ as the time step size and $t_{n}=n\tau$ for $n = 0, 1, ...$ as the time steps.
Denote $\Phi^n(x)$ to be the numerical approximation of $\Phi(t_n,x)$,
where $\Phi(t,x)$ is the exact solution of \eqref{eq:NLDirac_nonmag} with \eqref{eq:F} and \eqref{eq:initial}, then through applying the discrete-in-time first-order splitting (Lie-Trotter splitting) \cite{Trotter}, $S_1$ can be represented as \cite{BCJY}:
\be\label{eq:S1}
\Phi^{n + 1}(x) = e^{-\frac{i\tau}{\eps^2}Q^\eps}e^{-i\tau\left[V(x) + \mathbf{F}(\Phi^n(x))\right]}\Phi^n(x), \quad \text{with } \Phi^0(x) = \Phi_0(x), \quad x\in\mathbb{R}.
\ee
For simplicity, we also write $\Phi^{n+1}(x):=S_{n,\tau}^{\text{Lie}}(\Phi^n)$, where $S_{n,\tau}^{\text{Lie}}$ denotes the numerical propagator of the Lie-Trotter splitting.

\subsection{A uniform error bound}
For any $0<T<T^*$, where $T^*$ denotes the common maximal existence time of the solution for \eqref{eq:NLDirac1d2d} with \eqref{eq:F} and \eqref{eq:initial} for all $0<\eps\leq1$, we are going to consider smooth solutions, i.e. we assume the electric potential satisfies
\begin{equation*}
(A)\hskip 5cm  V(x)\in 
W^{2m+1, \infty}(\Bbb R), \; m\in\mathbb{N}^*.\hskip 8cm
\end{equation*}
In addition, we assume the exact solution $\Phi(t, x)$ satisfies
\begin{equation*}
(B)\hskip 4cm \Phi(t, x)\in L^\infty([0, T];(H^{2m+1}(\mathbb{R}))^2), \quad m\in\mathbb{N}^*. \hskip 6cm
\end{equation*}

For the numerical approximation $\Phi^n(x)$ obtained from $S_1$ \eqref{eq:S1}, we introduce the error function
\be\label{eq:error}
{\bf e}^n(x) = \Phi(t_n, x) - \Phi^n(x), \quad 0\leq n\leq \frac{T}{\tau},
\ee
then the following uniform error bound in $H^1$ norm can be established, where the $H^1$ norm for  function $\Phi(x)=(\phi_1,\phi_2)^T\in\mathbb{C}^2$ is given by
\begin{equation}
\|\Phi\|_{H^1}^2=\|\Phi\|_{L^2}^2+\|\partial_x\Phi\|_{L^2}^2,\end{equation}
with $L^2$ norm defined as $\|\Phi\|_{L^2}=\sqrt{\int_{\mathbb{R}}|\Phi(x)|^2\,dx}=\sqrt{\int_{\mathbb{R}}\left(|\phi_1(x)|^2+|\phi_2(x)|^2\right)\,dx}$.

\begin{theorem}\label{thm:lie}
	Let $\Phi^n(x)$ be the numerical approximation obtained from $S_1$ \eqref{eq:S1},
	then under assumptions $(A)$ and $(B)$ with $m = 1$, there exists $0<\tau_0\leq1$ independent of $\eps$ such that
	the following two error estimates hold for  $0<\tau<\tau_0$
	\be\label{eq:thm1}
	\|{\bf e}^n(x)\|_{H^1}\lesssim \tau+\eps,\quad \|{\bf e}^n(x)\|_{H^1}\lesssim \tau+\tau/\eps, \quad 0\le n\le\frac{T}{\tau}.
	\ee
	Consequently, there is a uniform error bound for $S_1$ when $0<\tau<\tau_0$
	\be\label{eq:thm2}
	\|{\bf e}^n(x)\|_{H^1}\lesssim \tau + \max_{0<\eps\leq 1}\min\{\eps, \tau/\eps\} \lesssim \sqrt{\tau}, \quad 0\le n\le\frac{T}{\tau}.
	\ee
\end{theorem}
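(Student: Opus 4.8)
The plan is to control the global error by combining a discrete stability estimate with a sharp one-step (local) error bound, and to close the argument by a discrete Gronwall inequality together with an induction that keeps the numerical solution bounded. Writing $\mathbf{e}^{n+1}=S_{n,\tau}^{\text{Lie}}(\Phi(t_n))-S_{n,\tau}^{\text{Lie}}(\Phi^n)+\mathcal{E}^n$ with local error $\mathcal{E}^n:=\Phi(t_{n+1})-S_{n,\tau}^{\text{Lie}}(\Phi(t_n))$, I would first establish stability: the kinetic factor $e^{-i\tau Q^\eps/\eps^2}$ is an $L^2$- and $H^1$-isometry, while $e^{-i\tau[V+\mathbf{F}(\Phi^n)]}$ is Lipschitz with constant $1+C\tau$ on any $H^1$-ball (using that $H^1(\mathbb{R})$ is a Banach algebra, so that $\mathbf{F}(\Phi)$ and its differences are controlled). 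This yields $\|\mathbf{e}^{n+1}\|_{H^1}\le(1+C\tau)\|\mathbf{e}^n\|_{H^1}+\|\mathcal{E}^n\|_{H^1}$, provided $\|\Phi^n\|_{H^1}$ stays bounded; the latter is guaranteed by induction together with the error bound being proved. Everything then reduces to estimating $\mathcal{E}^n$ and to summing it over the $n\lesssim T/\tau$ steps in two different ways.

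For the local error I would deliberately avoid the Lie-calculus/BCH expansion: iterated commutators of the stiff operator $\frac{1}{\eps^2}Q^\eps$ with the potential and nonlinearity generate factors $1/\eps^2$ and $1/\eps^3$, which would force the non-uniform restriction $\tau\lesssim\eps$. Instead I would apply Duhamel's formula to the exact solution on $[t_n,t_{n+1}]$, Taylor-expand the numerical factor $e^{-i\tau[V+\mathbf{F}(\Phi^n)]}$ to first order, and keep the resulting oscillatory integral intact. The decisive structural fact -- the origin of super-resolution -- is that the $O(1/\eps^2)$ part $\frac{1}{\eps^2}\sigma_3$ commutes with the scalar potential $V$ and with the band-diagonal (resonant) part of $\mathbf{F}(\Phi)$, while only the first-order operator $-\frac{i}{\eps}\sigma_1\partial_x$ fails to commute. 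Consequently the dangerous $1/\eps^2$ contribution drops out of the leading local error, which reduces to $O(\tau^2/\eps)$ and is, moreover, purely band-off-diagonal, carrying the fast phase $e^{\pm 2it\sqrt{1+\eps^2k^2}/\eps^2}$ produced by the spectral gap $\ge 2/\eps^2$ of $Q^\eps$ (made explicit by diagonalising $Q^\eps$ in Fourier space, eigenvalues $\pm\sqrt{1+\eps^2k^2}$). Assumptions $(A)$ and $(B)$ with $m=1$, i.e. $V\in W^{3,\infty}$ and $\Phi\in H^3$, are exactly what is needed to absorb the extra $x$-derivatives appearing in these terms when measuring $\mathcal{E}^n$ in $H^1$.

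The two estimates in \eqref{eq:thm1} then arise from summing the oscillatory local errors in two ways. The crude bound $\|\mathcal{E}^n\|_{H^1}\lesssim \tau^2/\eps$ inserted into the discrete Gronwall inequality immediately gives $\|\mathbf{e}^n\|_{H^1}\lesssim\tau+\tau/\eps$, the second estimate. For the first estimate $\tau+\eps$ I would not bound each step in isolation but re-sum the oscillatory contributions along the numerical flow via the discrete Duhamel representation $\mathbf{e}^n=\sum_{l}\mathcal{S}_{n,l}\,\mathcal{E}^l$. Because each off-diagonal local error carries the fast phase, integrating (equivalently, summing) it over a step produces the gain $\eps^2$ coming from the $O(1/\eps^2)$ spectral gap; combined with the $O(1/\eps)$ amplitude this converts one factor $\tau/\eps$ into $\eps$, while the slow $l$-dependence of the amplitudes and the nonlinear coupling are absorbed by summation by parts and Gronwall. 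Combining the two estimates and optimising over $\eps$, one has $\min\{\eps,\tau/\eps\}\le\sqrt{\tau}$ for every $0<\eps\le 1$, which is precisely \eqref{eq:thm2}.

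The hardest point, and the place where the nonlinearity genuinely differs from the linear analysis of \cite{BCY}, is that the source $\mathbf{F}(\Phi)\Phi$ is itself fast-oscillating with wavelength $O(\eps^2)$ and therefore cannot be treated as slowly varying. I would expand $\mathbf{F}$ in the band variables and verify that the cubic interaction produces only resonant (phase-cancelling) combinations at the critical $O(1/\eps^2)$ order, so that all genuinely oscillatory combinations are multiplied by the spectral gap and hence summable uniformly in $\eps$. Establishing this cancellation, and then controlling the resulting oscillatory sums uniformly in $\tau$ and $\eps$ -- including across the resonant set where $\tau\sqrt{1+\eps^2k^2}/\eps^2$ is close to an integer multiple of $\pi$, rather than only in the non-resonant regime -- is the crux of the argument. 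Once it is in place, the induction closes: the proven bound $\|\mathbf{e}^n\|_{H^1}\lesssim\sqrt{\tau}$ keeps $\Phi^n$ inside the $H^1$-ball on which the stability and local-error estimates were derived, completing the proof.
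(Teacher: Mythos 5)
Your overall architecture --- $H^1$ stability of the Lie propagator, a Duhamel-based local error representation (avoiding BCH/commutator expansions), a band decomposition via the projectors $\Pi_\pm^\eps$, two complementary estimates closed by discrete Gronwall and an induction keeping $\|\Phi^n\|_{H^1}$ bounded --- is exactly the paper's strategy, and your derivation of the second bound $\tau+\tau/\eps$ from a per-step local error of size $O(\tau^2/\eps)$ is correct and matches the paper.

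The gap is in the first bound $\tau+\eps$. You propose to obtain it by re-summing the oscillatory local errors across time steps (``summation by parts'', ``the gain $\eps^2$ coming from the spectral gap''). A cross-step resummation of the phases $e^{\pm 2ik\tau/\eps^2}$, $e^{\pm 4ik\tau/\eps^2}$ gains only when the geometric sums $\sum_k e^{\pm 2ik\tau/\eps^2}$ are bounded, i.e.\ when $\tau$ is non-resonant; that is precisely the mechanism of the improved bound in Theorem \ref{thm:lie2}, and it fails at resonant steps $\tau=k\pi\eps^2/2$, where all phases collapse to $1$. Theorem \ref{thm:lie}, however, must hold for all $\tau$. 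The paper's actual mechanism is purely local and needs no phase cancellation across steps: after removing the $s$-independent part $\tilde f^n$, which contributes nothing to the quadrature error $\int_0^\tau f^n(s)\,ds-\tau f^n(0)$, the remaining oscillatory integrand $f_2^n(s)$ has amplitude $O(\eps)$ in $H^1$. This smallness comes from $\Pi_\pm^\eps=\Pi_\pm^0\pm\eps\mathcal{R}_1$ with $\Pi_+^0\Pi_-^0=0$: every term of $f_2^n$ is a band-mismatched product such as $\Pi_-^\eps(\mathbf{g}_2^n\Phi_+^\eps)$, or contains $\mathbf{g}_1^n\sim(\Phi_-^\eps)^*\sigma_3\Phi_+^\eps$, all of which vanish at $\eps=0$. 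Hence $\|\eta_2^n\|_{H^1}\le 2\tau\sup_s\|f_2^n(s)\|_{H^1}\lesssim\tau\eps$ per step, and the \emph{absolute} sum over the $T/\tau$ steps already gives $O(\eps)$. Your bookkeeping (``$O(1/\eps)$ amplitude'' times ``$\eps^2$ gain'' converts $\tau/\eps$ into $\eps$) does not close: an $O(\eps)$ local error per step would sum to $O(\eps/\tau)$, and a stationary-phase gain within a single step cannot be uniform in the resonance parameter. What is needed is the amplitude estimate $\|f_2^n\|_{H^1}\lesssim\eps$ (and correspondingly $\|\partial_s f_2^n\|_{H^1}\lesssim 1/\eps$, which you already use implicitly for the second bound) stated and exploited directly; once it is in place, the ``crux'' you identify --- controlling oscillatory sums across the resonant set --- disappears from this theorem entirely and is only relevant for the non-resonant improvement.
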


\begin{rmk}
Instead of proving the $L^2$ error bounds as in the linear case, in Theorem \ref{thm:lie} and the other results in this paper for the 1D problem, we prove the $H^1$ error bounds for ${\bf e}^n(x)$ due to the fact that $H^1(\mathbb{R})$ is an algebra, and the corresponding estimates should be in $H^2$ norm for 2D and 3D cases (2D case in the sense of \eqref{eq:NLDirac1d2d}, and 3D case in the sense of the four-component nonlinear Dirac equation given in \cite{BCJY}) with of course higher regularity assumptions (higher order Sobolev norm estimates need higher regularity of the exact solution). 
\end{rmk}

\begin{rmk}
	In Theorem \ref{thm:lie}, the $H^3$ regularity ($m=1$ in assumptions (A) and (B)) is assumed for the first order Lie splitting scheme, and this regularity assumption is sharp for the results stated in the theorem. Heuristically, the estimates of the type $\|{\bf e}^n(x)\|_{H^1}\lesssim \tau+\eps$ hold for $\varepsilon\in(0,1]$, while in the limit $\varepsilon\to0^+$, the NLDE \eqref{eq:NLDirac1d2d} converges to the coupled nonlinear Schr\"odinger equations (CNLSE) after filtering out the nonrelativistic temporal oscillations \cite{BCJY, Foldy, N, CW}.  Thus, letting $\varepsilon\to0^+$,  the estimates $\|{\bf e}^n(x)\|_{H^1}\lesssim \tau+\eps$ will become the error bounds for the Lie splitting method applied to CNLSE. For the $H^1$ error estimates of the  Lie splitting  in the case of Schr\"odinger type equations, the regularity requirement of the exact solution should be 2 orders higher \cite{Lubich} (one oder temporal derivative corresponds to two order spatial derivative in Schr\"odinger type equations), i.e. $H^3$ regularity of the exact solution is needed.
\end{rmk}

\smallskip

{\sl For simplicity of the presentation, in the proof for this theorem and other theorems later for NLDE in this paper, we take $V(x) \equiv 0$}. Extension to the case where
$V(x)\ne 0$ is straightforward \cite{BCY}.  Compared to the linear case \cite{BCY}, the nonlinear term is much more complicated to analyze.  As discussed earlier in the introduction, for the linear Dirac equation, the linearity and $L^2$ unitary property of the numerical propagator enable the explicit expression (exact) of the error ${\bf e}^n(x)$ by the local error (see Lemma \ref{lemma:lie}) without any extra condition on time step $\tau$. Therefore, the error estimates (in $L^2$ norm)  in \cite{BCY} are obtained by carefully studying the accumulation of the local errors. However, for the nonlinear Dirac equation case, the approach (highly depend on the linear property) in \cite{BCY} fails. Different from the linear case, the novelty of the strategy we adopt for the nonlinear case  lies in the following aspects: (i)  carefully carry out   expansions of the nonlinear terms to analyze the local errors and identify the leading temporal oscillations; and (ii) estimate the errors in $H^1$ norm where the conditional stabilities of the numerical propagators (see Lemma \ref{lem:lie:stab}) and  the NLDE \eqref{eq:NLDirac1d2d} hold, and then control the nonlinear terms by mathematical induction, the uniform error estimates \eqref{eq:thm2} and Sobolev inequalities (see \eqref{eq:indu} and the proof after).  We emphasis here that our analysis and convergence rate results are valid for the linear Dirac equation, while the approach and error estimates in the linear case \cite{BCY} can not be applied here for the nonlinear case. Of course, the dependence of the constant on time $T$ in front of the convergence rate is sharper in the linear case in \cite{BCY} than that in Theorem 2.1.

    As mentioned above, a key issue of the error analysis for NLDE is to control the nonlinear term of numerical solution $\Phi^n$, and for which we require the following stability lemma \cite{Lubich}.
\begin{lemma}\label{lem:lie:stab} Suppose $V(x)\in W^{1,\infty}(\mathbb{R})$, and $\Phi(x),\Psi(x)\in (H^1(\mathbb{R}))^2$ satisfy
$\|\Phi\|_{H^1},\|\Psi\|_{H^1} \leq M$, we have
\begin{equation}
\|S_{n,\tau}^{\rm{Lie}}(\Phi)-S_{n,\tau}^{\rm{Lie}}(\Psi)\|_{H^1}\leq e^{c_1\tau}\|\Phi-\Psi\|_{H^1},
\end{equation}
where $c_1$ depends on $M$ and $\|V(x)\|_{W^{1,\infty}}$.
\end{lemma}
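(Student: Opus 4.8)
The plan is to exploit the composition structure of the Lie propagator, writing $S_{n,\tau}^{\mathrm{Lie}}(\Phi)=e^{-\frac{i\tau}{\eps^2}Q^\eps}\,\mathcal{N}_\tau(\Phi)$ with the nonlinear flow $\mathcal{N}_\tau(\Phi):=e^{-i\tau[V+\mathbf{F}(\Phi)]}\Phi$, and to reduce the whole estimate to a local Lipschitz bound on $\mathcal{N}_\tau$ in $H^1$. First I would observe that $Q^\eps=-i\eps\sigma_1\partial_x+\sigma_3$ is Hermitian and commutes with $\partial_x$ (the Pauli matrices are constant), so the free propagator $e^{-\frac{i\tau}{\eps^2}Q^\eps}$ is an isometry on both $L^2$ and $\dot H^1$, hence on $H^1$, uniformly in $\eps$ and $\tau$. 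This removes the stiff factor $1/\eps^2$ entirely and leaves $\|S_{n,\tau}^{\mathrm{Lie}}(\Phi)-S_{n,\tau}^{\mathrm{Lie}}(\Psi)\|_{H^1}=\|\mathcal{N}_\tau(\Phi)-\mathcal{N}_\tau(\Psi)\|_{H^1}$.

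Next, the key algebraic fact is that $V(x)+\mathbf{F}(\Phi(x))$ is a pointwise Hermitian matrix (since $\Phi^*\sigma_3\Phi$ and $|\Phi|^2$ are real and $\sigma_3,I_2$ are Hermitian), so $e^{-i\tau[V+\mathbf{F}(\Phi)]}$ is pointwise unitary. For the $L^2$ part I would split $\mathcal{N}_\tau(\Phi)-\mathcal{N}_\tau(\Psi)=e^{-i\tau[V+\mathbf{F}(\Phi)]}(\Phi-\Psi)+\big(e^{-i\tau[V+\mathbf{F}(\Phi)]}-e^{-i\tau[V+\mathbf{F}(\Psi)]}\big)\Psi$: the first term is norm preserving and gives exactly $\|\Phi-\Psi\|_{L^2}$, while the second is controlled by the elementary matrix bound $\|e^{-i\tau A}-e^{-i\tau B}\|\le\tau\|A-B\|$ (from $e^{-i\tau A}-e^{-i\tau B}=-i\tau\int_0^1 e^{-i\tau sA}(A-B)e^{i\tau sB}\,ds\,e^{-i\tau B}$) together with the local Lipschitz estimate $\|\mathbf{F}(\Phi)-\mathbf{F}(\Psi)\|\lesssim(|\Phi|+|\Psi|)|\Phi-\Psi|$ and the 1D embedding $H^1\hookrightarrow L^\infty$, yielding a contribution $\lesssim\tau M^2\|\Phi-\Psi\|_{L^2}$.

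The main work, and the hard part, is the $\dot H^1$ (derivative) estimate, for which I would differentiate both summands and repeatedly use the representation $\partial_x e^{-i\tau A(x)}=-i\tau\int_0^1 e^{-i\tau sA}(\partial_x A)e^{-i\tau(1-s)A}\,ds$, giving $\|\partial_x e^{-i\tau A}\|\le\tau\|\partial_x A\|$, together with the analogous formula for $\partial_x(e^{-i\tau A}-e^{-i\tau B})$. Differentiating the product $e^{-i\tau[V+\mathbf{F}(\Phi)]}(\Phi-\Psi)$ produces one genuinely leading term $e^{-i\tau[V+\mathbf{F}(\Phi)]}\partial_x(\Phi-\Psi)$, again norm preserving and contributing exactly $\|\partial_x(\Phi-\Psi)\|_{L^2}$, plus a collection of remainder terms each carrying an explicit factor $\tau$. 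The delicate point is that $\partial_x\mathbf{F}(\Phi)$ is bilinear in $(\Phi,\partial_x\Phi)$ and $\partial_x A-\partial_x B=\partial_x\mathbf{F}(\Phi)-\partial_x\mathbf{F}(\Psi)$ contains the factor $\partial_x(\Phi-\Psi)$; since only $H^1$ control of $\Phi,\Psi$ is available (no $H^2$), one cannot place the derivative factors in $L^\infty$. The remedy is to keep exactly one derivative factor in $L^2$ and put the remaining undifferentiated factors in $L^\infty$ via $H^1\hookrightarrow L^\infty$, using $\|\Phi\|_{H^1},\|\Psi\|_{H^1}\le M$; every remainder is then bounded by $C(M,\|V\|_{W^{1,\infty}})\,\tau\,\|\Phi-\Psi\|_{H^1}$ (the terms involving two exponential differences even gain a factor $\tau^2$).

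Finally I would add the $L^2$ and derivative estimates to obtain $\|\mathcal{N}_\tau(\Phi)-\mathcal{N}_\tau(\Psi)\|_{H^1}\le(1+c_1\tau)\|\Phi-\Psi\|_{H^1}$ with $c_1=c_1(M,\|V\|_{W^{1,\infty}})$, and conclude using $1+c_1\tau\le e^{c_1\tau}$. I expect the only real difficulty to be the careful bookkeeping of the derivative remainder terms and the correct $L^2/L^\infty$ allocation described above; everything else is a routine consequence of the unitarity of the two flows and the algebra property of $H^1(\mathbb{R})$.
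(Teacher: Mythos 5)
Your proposal is correct, and it is essentially the argument the paper has in mind: the paper gives no proof of this lemma at all, stating only that it is ``quite similar to the nonlinear Schr\"odinger equation case in \cite{Lubich}'' -- your write-up supplies exactly that standard stability argument (isometry of the free flow on $H^1$, pointwise unitarity of the Hermitian nonlinear/potential flow, the Duhamel-type representation of $e^{-i\tau A}-e^{-i\tau B}$, and the $L^2$/$L^\infty$ allocation of derivative factors via $H^1(\mathbb{R})\hookrightarrow L^\infty(\mathbb{R})$). One small point worth making explicit in the final step: to land on $e^{c_1\tau}$ with leading constant exactly $1$ (rather than, say, $\sqrt{2}\,e^{c_1\tau}$), you should combine the $L^2$ and $\dot H^1$ bounds through the squared norms, i.e.\ write $\|U(\Phi-\Psi)\|_{L^2}=\|\Phi-\Psi\|_{L^2}$ and $\|\partial_x\bigl(U(\Phi-\Psi)\bigr)\|_{L^2}\leq\|\partial_x(\Phi-\Psi)\|_{L^2}+C\tau\|\Phi-\Psi\|_{H^1}$ and then expand $\|\cdot\|_{H^1}^2$ to get $(1+C'\tau)^2\|\Phi-\Psi\|_{H^1}^2$; a naive triangle-inequality addition of the two norms would lose the sharp constant that the stated bound requires. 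With that bookkeeping, and absorbing the $O(\tau^2)$ remainders for $\tau\leq 1$, your proof is complete.
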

\begin{proof} The proof is quite similar to the nonlinear Schr\"odinger equation case in \cite{Lubich} and we omit it here for brevity.
\end{proof}

Under the assumption (B) ($m\ge1$), for $\eps\in(0,1]$, we denote $M_1>0$ as
\be\label{eq:M1}
M_1=\sup\limits_{\eps\in(0,1]}\|\Phi(t,x)\|_{L^\infty([0,T]; (H^1(\mathbb{R}))^2)}.
\ee
Based on \eqref{eq:M1} and Lemma \ref{lem:lie:stab}, one can control the nonlinear term once the hypothesis of the lemma is fulfilled. Making use of the fact that $S_1$ is explicit, together with the uniform error estimates in Theorem \ref{thm:lie}, we can use mathematical induction to complete the proof.

The following properties of $Q^\eps$ will be frequently used in the analysis. $Q^\eps$ is diagonalizable in the phase space (Fourier domain) and can be decomposed as
\be\label{eq:Teps}
Q^\eps = \sqrt{Id - \eps^2\Delta} \;\Pi_+^\eps - \sqrt{Id - \eps^2\Delta}\; \Pi_-^\eps,
\ee
where $\Delta = \partial_{xx}$ is the Laplace operator in 1D, $Id$ is the identity operator, and $\Pi_+^\eps$, $\Pi_-^\eps$ are projectors defined as
\be
\Pi_+^\eps = \frac{1}{2}\left[Id + (Id - \eps^2\Delta)^{-1/2}Q^\eps\right], \quad \Pi_-^\eps = \frac{1}{2}\left[Id - (Id - \eps^2\Delta)^{-1/2}Q^\eps\right].
\ee
It is straightforward to verify that
$\Pi_+^\eps + \Pi_-^\eps = Id$, $\quad \Pi_+^\eps\Pi_-^\eps = \Pi_-^\eps\Pi_+^\eps = 0$, $\quad (\Pi_\pm^\eps)^2 = \Pi_\pm^\eps$, and through Taylor expansion, we have \cite{BMP}
\begin{align}\label{eq:pi+}
\Pi_\pm^\eps=\Pi_\pm^0\pm\eps\mathcal{R}_1=\Pi_\pm^0\mp i\frac{\eps}{2}\sigma_1\partial_x\pm\eps^2\mathcal{R}_2,\quad \Pi_+^0=\text{diag}(1,0),\quad \Pi_-^0=\text{diag}(0,1),
\end{align}
with $\mathcal{R}_1: (H^m(\mathbb{R}))^2 \rightarrow (H^{m-1}(\mathbb{R}))^2$ for $m\geq 1$, $\mathcal{R}_2: (H^m(\mathbb{R}))^2\rightarrow(H^{m-2}(R))^2$ for $m\geq 2$ being uniformly bounded operators w.r.t. $\eps$. For simplicity of expression, we denote
\begin{equation}
\Phi_\pm^\eps(t, x):=\Pi_\pm^\eps\Phi(t, x).
\end{equation}
In order to characterize the oscillatory features of the solution, noticing $(\sqrt{Id - \eps^2\Delta} - Id)(\sqrt{Id - \eps^2\Delta} + Id)=-\eps^2\Delta$, we denote
\be\label{eq:Deps}
\mathcal{D}^\eps = \frac{1}{\eps^2}(\sqrt{Id - \eps^2\Delta} - Id) = -(\sqrt{Id - \eps^2\Delta} + Id)^{-1}{\Delta},
\ee
which is a uniformly bounded operator w.r.t $\eps$ from $(H^m(\mathbb{R}))^2\rightarrow(H^{m-2}(\mathbb{R}))^2$ for $m\geq 2$, then the evolution operator $e^{\frac{it}{\eps^2}Q^\eps}$ can be expressed as
\be
\begin{aligned}\label{eq:dec:ev}
	e^{\frac{it}{\eps^2}Q^\eps} =  e^{\frac{it}{\eps^2}(\sqrt{Id - \eps^2\Delta}\Pi_+^\eps - \sqrt{Id - \eps^2\Delta}\Pi_-^\eps)} =e^{\frac{it}{\eps^2}}e^{it\mathcal{D}^\eps}\Pi_+^\eps + e^{-\frac{it}{\eps^2}}e^{-it\mathcal{D}^\eps}\Pi_-^\eps.
\end{aligned}
\ee
For simplicity, here we use $\Phi(t) := \Phi(t, x)$, $\Phi^n := \Phi^n(x)$ in short.

%
Now we are ready to introduce the following lemma for proving Theorem \ref{thm:lie}.

\begin{lemma}\label{lemma:lie}
	Let $\Phi^n(x)$ ( $0\leq n\leq \frac{T}{\tau} - 1$)  be obtained from $S_1$ \eqref{eq:S1} satisfying $\|\Phi^n(x)\|_{H^1}\leq M_1+1$,
 under the assumptions of Theorem \ref{thm:lie}, we have
	\be
	{\bf e}^{n+1}(x) = e^{-\frac{i\tau}{\eps^2}Q^\eps}e^{-i\tau\mathbf{F}(\Phi^n)}{\bf e}^n(x) + \eta_1^n(x) + e^{-\frac{i\tau}{\eps^2}Q^\eps}\eta_2^n(x),
	\ee
	with $\|\eta_1^n(x)\|_{H^1}\leq c_1\tau^2 + c_2\tau\|{\bf e}^n(x)\|_{H^1}$, $\eta_2^n(x) = \int_0^\tau f_2^n(s)ds - \tau f_2^n(0)$, where $c_1$ depends on $M_1$, $\lambda_1$,  $\lambda_2$ and $\|\Phi(t,x)\|_{L^\infty([0,T];(H^3)^2)}$; $c_2$ depends on $M_1$, $\lambda_1$, and $\lambda_2$. Here
	\begin{align}
	f_2^n(s) =& -ie^{\frac{-4is}{\eps^2}}\Pi_-^\eps\left(\mathbf{g}_1^n(x)\Phi_+^\eps(t_n)\right) -i e^{\frac{4is}{\eps^2}}\Pi_+^\eps\left(\overline{\mathbf{g}_1^n}(x)\Phi_-^\eps(t_n)\right)\nonumber\\
	& -i e^{\frac{-i2s}{\eps^2}}\left[\Pi_+^\eps\left(\mathbf{g}_1^n(x)\Phi_+^\eps(t_n)\right) + \Pi_-^\eps\left(\mathbf{g}_2^n(x)\Phi_+^\eps(t_n)+\mathbf{g}_1^n(x)\Phi_-^\eps(t_n)  \right)\right]\nonumber\\
	& - ie^{\frac{2is}{\eps^2}}\left[\Pi_-^\eps\left(\overline{\mathbf{g}_1^n}(x)\Phi_-^\eps(t_n)\right) + \Pi_+^\eps\left(\mathbf{g}_2^n(x)\Phi_-^\eps(t_n) + \overline{\mathbf{g}_1^n}(x)\Phi_+^\eps(t_n)\right)\right] \label{eq:f2nlie},
	\end{align}
where $\mathbf{g}_{j}^n(x)=\mathbf{g}_j(\Phi_+^\eps(t_n),\Phi_-^\eps(t_n))$ and
	\begin{align}
	& \mathbf{g}_1(\Phi_+^\eps(t_n),\Phi_-^\eps(t_n)) = \lambda_1\left((\Phi_-^\eps(t_n))^*\sigma_3\Phi_+(t_n)\right)\sigma_3+
	\lambda_2\left((\Phi_-^\eps(t_n))^*\Phi_+^\eps(t_n)\right)I_2, \label{eq:g1}\\
	& \mathbf{g}_2(\Phi_+^\eps(t_n),\Phi_-^\eps(t_n)) = \sum_{\sigma=\pm}\left[\lambda_1((\Phi_\sigma^\eps(t_n))^*\sigma_3
\Phi_\sigma^\eps(t_n))\sigma_3+\lambda_2|\Phi_\sigma^\eps(t_n)|^2I_2\right].
	\label{eq:g2}
	\end{align}
\end{lemma}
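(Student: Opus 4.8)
The plan is to obtain the one-step identity from the Duhamel (variation-of-constants) formula for the exact flow together with an integral rewriting of the numerical step, and then to isolate the leading temporal oscillations. First I would write
\begin{equation*}
\Phi(t_{n+1}) = e^{-\frac{i\tau}{\eps^2}Q^\eps}\Phi(t_n) - i\int_0^\tau e^{-\frac{i(\tau-s)}{\eps^2}Q^\eps}\mathbf{F}(\Phi(t_n+s))\Phi(t_n+s)\,ds,
\end{equation*}
and decompose ${\bf e}^{n+1} = (\Phi(t_{n+1}) - \widetilde{\Phi}^{n+1}) + (\widetilde{\Phi}^{n+1} - \Phi^{n+1})$, where $\widetilde{\Phi}^{n+1} = e^{-\frac{i\tau}{\eps^2}Q^\eps}e^{-i\tau\mathbf{F}(\Phi(t_n))}\Phi(t_n)$ is the numerical step applied to the exact data. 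Expanding the nonlinear substep as $e^{-i\tau\mathbf{F}(\Phi(t_n))}\Phi(t_n) = \Phi(t_n) - i\tau\mathbf{F}(\Phi(t_n))\Phi(t_n) + \mathcal{R}_\tau$ with $\|\mathcal{R}_\tau\|_{H^1}\lesssim\tau^2$ (using that $H^1(\mathbb{R})$ is an algebra) and factoring out the $H^1$-isometry $e^{-\frac{i\tau}{\eps^2}Q^\eps}$, the first bracket becomes $e^{-\frac{i\tau}{\eps^2}Q^\eps}\bigl[\int_0^\tau \tilde f(s)\,ds - \tau\tilde f(0)\bigr]$ up to an $O(\tau^2)$ term, with $\tilde f(s) = -ie^{\frac{is}{\eps^2}Q^\eps}\mathbf{F}(\Phi(t_n+s))\Phi(t_n+s)$ and $\tilde f(0) = -i\mathbf{F}(\Phi(t_n))\Phi(t_n)$. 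The second bracket I would split as $e^{-\frac{i\tau}{\eps^2}Q^\eps}e^{-i\tau\mathbf{F}(\Phi^n)}{\bf e}^n + e^{-\frac{i\tau}{\eps^2}Q^\eps}\bigl[e^{-i\tau\mathbf{F}(\Phi(t_n))} - e^{-i\tau\mathbf{F}(\Phi^n)}\bigr]\Phi(t_n)$, which already produces the propagated-error term of the lemma.

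The heart of the argument is to replace $\tilde f(s)$ by its leading oscillatory model $f_2^n(s)$. I would substitute the free-flow approximation $\Phi(t_n+s) \approx e^{-\frac{is}{\eps^2}Q^\eps}\Phi(t_n)$ into $\tilde f$ and use the spectral decomposition \eqref{eq:dec:ev} together with $\Phi = \Phi_+^\eps + \Phi_-^\eps$. Writing the inner free flow to leading phase order as $e^{\mp is/\eps^2}\Phi_\pm^\eps(t_n)$, the cubic nonlinearity $\mathbf{F}(\Phi)\Phi$ organizes into a non-oscillating (diagonal) piece governed by $\mathbf{g}_2$ and into cross pieces carrying $e^{\mp2is/\eps^2}$ governed by $\mathbf{g}_1$ and $\overline{\mathbf{g}_1}$; applying the outer propagator $e^{\frac{is}{\eps^2}Q^\eps}\approx e^{is/\eps^2}\Pi_+^\eps + e^{-is/\eps^2}\Pi_-^\eps$ then shifts each frequency by $\pm1/\eps^2$. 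Collecting terms by frequency, the $e^{\pm2is/\eps^2}$ and $e^{\pm4is/\eps^2}$ contributions reproduce exactly $f_2^n(s)$ in \eqref{eq:f2nlie}--\eqref{eq:g2}, while the frequency-zero contributions are $s$-independent and hence cancel in the quadrature difference $\int_0^\tau(\cdot)\,ds - \tau(\cdot)|_{s=0}$.

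It then remains to estimate the quadrature difference of $h(s) := \tilde f(s) - f_2^n(s)$. By construction $f_2^n$ collects precisely the oscillations of the leading (free-flow, leading-phase) model $\tilde f_{\mathrm{lead}}(s)$, and the latter agrees with $\tilde f$ at $s=0$, so $h(s) - h(0) = \tilde f(s) - \tilde f_{\mathrm{lead}}(s)$ vanishes at $s=0$. These amplitude corrections come from $e^{\pm is\mathcal{D}^\eps} - Id = O(s\mathcal{D}^\eps)$ in the two propagators and from the nonlinear Duhamel correction $\Phi(t_n+s) - e^{-\frac{is}{\eps^2}Q^\eps}\Phi(t_n)$; each is $O(s)$ in $(H^1)^2$ and carries only unit-modulus scalar phases, so $\|h(s)-h(0)\|_{H^1}\lesssim s$ uniformly in $\eps$. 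Hence $\|\int_0^\tau h\,ds - \tau h(0)\|_{H^1} = \|\int_0^\tau (h(s)-h(0))\,ds\|_{H^1}\le\int_0^\tau\|h(s)-h(0)\|_{H^1}\,ds\lesssim\tau^2$. Assembling $\eta_1^n$ from this $O(\tau^2)$ quadrature error, the Taylor remainder $\mathcal{R}_\tau$, and the stability term $e^{-\frac{i\tau}{\eps^2}Q^\eps}\bigl[e^{-i\tau\mathbf{F}(\Phi(t_n))} - e^{-i\tau\mathbf{F}(\Phi^n)}\bigr]\Phi(t_n)$ — which is $O(\tau\|{\bf e}^n\|_{H^1})$ by the Lipschitz bound on $\mathbf{F}$ and the a priori bounds $\|\Phi(t_n)\|_{H^1}\le M_1$, $\|\Phi^n\|_{H^1}\le M_1+1$ — yields $\|\eta_1^n\|_{H^1}\le c_1\tau^2 + c_2\tau\|{\bf e}^n\|_{H^1}$.

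The main obstacle is securing the uniform-in-$\eps$ bound $\|h(s)-h(0)\|_{H^1}\lesssim s$ \emph{without} ever differentiating through the fast phases $e^{\pm iks/\eps^2}$, since a single $s$-derivative of those phases would produce an unbounded factor $1/\eps^2$; the escape is that the corrections vanish at $s=0$, so one bounds them by their amplitude rather than their slope. This is also where the sharp regularity enters: the dangerous corrections pass through the operator $\mathcal{D}^\eps:(H^3)^2\to(H^1)^2$ of \eqref{eq:Deps}, which is bounded uniformly in $\eps$, and the cubic amplitudes $\mathbf{g}_j\,\Phi_\pm^\eps$ lie in $(H^3)^2$ under assumption (B) with $m=1$ (using that $H^3(\mathbb{R})$ is an algebra), giving $\|(e^{\pm is\mathcal{D}^\eps}-Id)w\|_{H^1}\lesssim s\|w\|_{H^3}$. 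This explains why $H^3$ regularity suffices and why $c_1$ depends on $\|\Phi\|_{L^\infty([0,T];(H^3)^2)}$ while $c_2$ depends only on $M_1,\lambda_1,\lambda_2$. The remaining verification — careful bookkeeping of the frequency shifts from the inner flow and the outer propagator, and checking that the collected $\pm2/\eps^2$ and $\pm4/\eps^2$ terms coincide with \eqref{eq:f2nlie} — is routine but delicate.
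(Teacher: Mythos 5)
Your proposal is correct and follows essentially the same route as the paper's proof: Duhamel's principle plus a Taylor/Lipschitz treatment of the nonlinear substep to isolate the propagated-error term $e^{-\frac{i\tau}{\eps^2}Q^\eps}e^{-i\tau\mathbf{F}(\Phi^n)}{\bf e}^n$, free-flow substitution and the decomposition $e^{\frac{is}{\eps^2}Q^\eps}=e^{\frac{is}{\eps^2}}e^{is\mathcal{D}^\eps}\Pi_+^\eps+e^{-\frac{is}{\eps^2}}e^{-is\mathcal{D}^\eps}\Pi_-^\eps$ to extract the oscillatory model $f_2^n$, with the $s$-independent terms cancelling in the quadrature difference and the $O(s)$ amplitude corrections (via $\mathcal{D}^\eps:(H^3)^2\to(H^1)^2$) absorbed into $\eta_1^n$. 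The only cosmetic difference is that you organize the splitting through the intermediate quantity $e^{-\frac{i\tau}{\eps^2}Q^\eps}e^{-i\tau\mathbf{F}(\Phi(t_n))}\Phi(t_n)$ while the paper works directly with its modified local truncation error $\eta^n$ and the terms $\Lambda_1^n,\Lambda_2^n$; the two bookkeepings are algebraically equivalent.
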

\begin{proof}
	Through the definition of ${\bf e}^n(x)$ \eqref{eq:error}, noticing the  formula \eqref{eq:S1}, we have
	\be\label{eq:S1_error}
	\mathbf{e}^{n+1}(x) = e^{-\frac{i\tau}{\eps^2}Q^\eps}e^{-i\tau\mathbf{F}(\Phi^n)}\mathbf{e}^n(x) + \eta^n(x), \quad 0\leq n\leq\frac{T}{\tau} - 1, \quad x\in\mathbb{R},
	\ee
	where $\eta^n(x)$ is the ``local truncation error" (notice that this is not the usual local truncation error, compared with $ \Phi(t_{n + 1}, x) - S_{n,\tau}^{\text{Lie}}\Phi(t_n, x)$),
	\be\label{eq:S1_lt}
	\eta^n(x) = \Phi(t_{n + 1}, x) - e^{-\frac{i\tau}{\eps^2}Q^\eps}e^{-i\tau\mathbf{F}(\Phi^n)}\Phi(t_n, x), \quad x\in\mathbb{R}.
	\ee
	By Duhamel's principle, the solution $\Phi(t,x)$ to \eqref{eq:NLDirac_nonmag}  satisfies
	\be\label{eq:Du2}
	\Phi(t_n+s, x) = e^{-\frac{is}{\eps^2}Q^\eps}\Phi(t_n, x) - i\int_0^s e^{-\frac{i(s-w)}{\eps^2}Q^\eps}\mathbf{F}(\Phi(t_n+w, x))\Phi(t_n+w, x)dw,\quad 0\leq s\leq \tau,
	\ee
	which implies that $\|\Phi(t_n+s,x)-e^{-\frac{is}{\eps^2}Q^\eps}\Phi(t_n, x)\|_{H^1}
	\lesssim \tau$ ($s\in[0,\tau]$).
Setting $s=\tau$ in \eqref{eq:Du2}, we have from \eqref{eq:S1_lt},
	\be\label{eq:S1_eta}
		 \eta^n(x) =e^{-\frac{i\tau}{\eps^2}Q^\eps} \left(\int_0^\tau f^n(s)ds-\tau f^n(0)\right)+ R_1^n(x) + R_2^n(x) ,
	\ee
	where
	\begin{align}\label{eq:fns}
	&f^n(s)=-ie^{\frac{is}{\eps^2}Q^\eps}\left(\mathbf{F}(e^{-\frac{is}{\eps^2}Q^\eps}\Phi(t_n))e^{-\frac{is}{\eps^2}Q^\eps}\Phi(t_n,x)\right),\quad {R}_1^n(x) = e^{-\frac{i\tau}{\eps^2}Q^\eps}\left(\Lambda_1^n(x)
+\Lambda_2^n(x)\right),\\
	&{R}_2^n(x)
	=-i\int_0^{\tau}e^{-\frac{i(\tau - s)}{\eps^2}Q^\eps}\left[\mathbf{F}(\Phi(t_n+s))\Phi(t_n+s) - \mathbf{F}(e^{-\frac{is}{\eps^2}Q^\eps}\Phi(t_n))e^{-\frac{is}{\eps^2}Q^\eps}\Phi(t_n)\right]ds,
	\end{align}
	with
	\begin{align}
	\label{eq:Taylor}
	\Lambda_1^n(x)=-\left(e^{-i\tau\mathbf{F}(\Phi^n)} - \left(I_2 - i\tau\mathbf{F}(\Phi^n)\right)\right)\Phi(t_n),\quad \Lambda_2^n(x)=\left(i\tau\left(\mathbf{F}(\Phi^n) - \mathbf{F}(\Phi(t_n)) \right)\right)\Phi(t_n).
	\end{align}
	Noticing \eqref{eq:M1}, \eqref{eq:Du2},   and the fact that $e^{-isQ^\eps/\eps^2}$ preserves $H^k$ norm, it is not difficult to find
	\be\label{eq:S1_R2}
	\|R_2^n(x)\|_{H^1}\lesssim M_1^2\int_0^\tau\|\Phi(t_n+s,x)-e^{-\frac{is}{\eps^2}Q^\eps}\Phi(t_n, x)\|_{H^1}\,ds\lesssim \tau^2.
	\ee
	On the other hand, from the definition of $\textbf{F}$ and the fact that $H^1(\mathbb{R})$ is an algebra, we have for any $\Phi_j(x) = (\phi_{j1}(x), \phi_{j2}(x))^T\in\mathbb{C}^2$, $j = 1, 2, 3$
	\begin{align}
	    \|(\mathbf{F}(\Phi_2) - \mathbf{F}(\Phi_1))\Phi_3\|_{H^1} &= \|\lambda_1\left[(|\phi_{21}|^2 - |\phi_{11}|^2) - (|\phi_{22}|^2 - |\phi_{12}|^2)\right]\sigma_3\Phi_3 + \lambda_2(|\Phi_2|^2 - |\Phi_1|^2)\Phi_3\|_{H^1}\nonumber\\
	    &\lesssim (\|\Phi_1\|_{H^1} + \|\Phi_2\|_{H^1})\|\Phi_2 - \Phi_1\|_{H^1}\|\Phi_3\|_{H^1}.
	\end{align} 
Having the above inequality, using the assumption that $\|\Phi^n\|_{H^1}\leq M_1+1$, and the Taylor expansion in $\Lambda_1^n(x)$, we get
	\begin{align}\label{eq:S1_R1}
	\|{R}_1^n(x)\|_{H^1}&\lesssim
	\tau^2 \|\Phi^n\|_{H^1}^2\|\Phi(t_n)\|_{H^1} +\tau M_1(M_1+1)\|\Phi^n-\Phi(t_n)\|_{H^1}
	\lesssim \tau^2 + \tau\|{\bf e}^n(x)\|_{H^1}.
	\end{align}
It remains to estimate the $f^n(s)$ part.
Using the decomposition \eqref{eq:dec:ev} and the Taylor exapnsion $e^{i\tau\mathcal{D}^\eps} = Id + O(\tau\mathcal{D}^\eps)$ (in the sense of phase space), we have $e^{\frac{-isQ^\eps}{\eps^2}}\Phi(t_n)=e^{\frac{-is}{\eps^2}}\Phi_+^\eps(t_n)+e^{\frac{is}{\eps^2}}\Phi_-^\eps(t_n)+O(s)$,
\be\label{eq:fn11}
f^n(s)=-i\sum_{\sigma=\pm}e^{\frac{\sigma is}{\eps^2}}\Pi_\sigma^\eps\left\{\mathbf{F}\left(e^{\frac{-is}{\eps^2}}\Phi_+^\eps(t_n)+e^{\frac{is}{\eps^2}}\Phi_-^\eps(t_n)\right)\,\left(e^{\frac{-is}{\eps^2}}\Phi_+^\eps(t_n)+e^{\frac{is}{\eps^2}}\Phi_-^\eps(t_n)\right)\right\}+f_1^n(s),
\ee
where for $s\in[0,\tau]$,
\be
\|f_1^n(s)\|_{H^1}\lesssim \tau \|\Phi(t_n)\|_{H^3}^3\lesssim \tau.
\ee
Since $\mathbf{F}$ is of polynomial type, by direct computation, we can further simplify \eqref{eq:fn11} to get
	\begin{align}
f^n(s)=f_1^n(s)+f_2^n(s)+\tilde{f}^n(s),\quad 0\leq s\leq\tau,
	\end{align}
	where  $f_2^n(s)$ is given in \eqref{eq:f2nlie} and $\tilde{f}^n(s)$ is independent of $s$ as
	\begin{equation}\label{eq:tfn}
	\tilde{f}^n(s)\equiv -i\left[\Pi_+^\eps\left(\mathbf{g}_2^n(x)\Phi_+^\eps(t_n)+\mathbf{g}_1^n(x)\Phi_-^\eps(t_n)  \right)+ \Pi_-^\eps\left(\mathbf{g}_2^n(x)\Phi_-^\eps(t_n)+\overline{\mathbf{g}_1^n}(x)\Phi_+^\eps(t_n)  \right)\right],
	\end{equation}	with $\mathbf{g}_{1,2}^n$ defined in \eqref{eq:g1}-\eqref{eq:g2}.
	
	Now, it is easy to verify that $\eta^n(x)=\eta_1^n(x)+\eta_2^n(x)$ with $\eta_2^n(x)$  given in Lemma \ref{lemma:lie} by choosing
	\be
	\eta_1^n(x)=e^{-\frac{i\tau}{\eps^2}Q^\eps}\left(\int_0^\tau (f_1^n(s)+\tilde{f}^n(s))ds-\tau (f_1^n(0)+\tilde{f}^n(0))\right)+ R_1^n(x) + R_2^n(x).
	\ee
	Noticing that $\tilde{f}^n(s)$ is independent of $s$ and $\|f_1^n(s)\|_{H^1}\lesssim \tau$,
	 combining  \eqref{eq:S1_R2} and \eqref{eq:S1_R1}, we can get
	\begin{align*}
	\|\eta_1^n(x)\|_{H^1} \leq \sum_{j=1}^2\|{R}_j^n(x)\|_{H^1}  +\left\|\int_0^\tau f_1^n(s)ds-\tau f_1^n(0)\right\|_{H^1} \lesssim \tau\|\mathbf{e}^n(x)\|_{H^1}+\tau^2,
	\end{align*}
	which completes  the proof of Lemma \ref{lemma:lie}.
\end{proof}

Now, we proceed to prove Theorem \ref{thm:lie}.\
\begin{proof}
We will prove by induction that the estimates \eqref{eq:thm1}-\eqref{eq:thm2} hold for all time steps $n\leq\frac{T}{\tau}$ together with
\be\label{eq:indu}
\|\Phi^n\|_{H^1}\leq M_1+1.
\ee
Since initially $\Phi^0=\Phi_0(x)$, $n=0$ case is obvious. Assume \eqref{eq:thm1}-\eqref{eq:thm2} and \eqref{eq:indu} hold true for all $0\leq n\leq p\leq\frac{T}{\tau}-1$, then we are going to prove the case $n=p+1$.
	
	From Lemma \ref{lemma:lie}, we have
	\begin{equation}\label{eq:e1l}
	{\bf e}^{n + 1}(x) = e^{-\frac{i\tau}{\eps^2}Q^\eps}e^{-i\tau\mathbf{F}(\Phi^n)}{\bf e}^n(x) + \eta_1^{n}(x) + e^{-\frac{i\tau}{\eps^2}Q^\eps} \eta_2^n(x), \quad 0\leq n\leq p,
	\end{equation}
	with $\|\eta_1^n(x)\|_{H^1}\lesssim\tau^2 + \tau\|{\bf e}^n(x)\|_{H^1}$, ${\bf e}^0=0$ and $\eta_2^n(x)$ given in Lemma \ref{lemma:lie}.
	
Denote $\mathcal{L}_n= e^{-\frac{i\tau}{\eps^2}Q^\eps}\left(e^{-i\tau \mathbf{F}(\Phi^n)}-I_2\right)$ ($0\leq n\leq p\leq\frac{T}{\tau}-1$), and
 it is straightforward to calculate
 \be\label{eq:Lst}
\| \mathcal{L}_n\Psi(x)\|_{H^1}\leq C_{M_1}\tau\|\Psi\|_{H^1},\quad \forall\Psi\in (H^1(\mathbb{R}))^2,
 \ee
 with $C_{M_1}$ only depending on $M_1$. Thus we can obtain from \eqref{eq:e1l} that for $0\leq n\leq p$,
	\begin{align}
		{\bf e}^{n+1}(x) &=  e^{-\frac{i\tau}{\eps^2}Q^\eps}{\bf e}^n(x) + \eta_1^n(x) +  e^{-\frac{i\tau}{\eps^2}Q^\eps}\eta_2^n(x)+\mathcal{L}_n{\bf e}^n(x) &\nonumber\\
		&=  e^{-\frac{2i\tau}{\eps^2}Q^\eps}{\bf e}^{n - 1}(x) +  e^{-\frac{i\tau}{\eps^2}Q^\eps}\left(\eta_1^{n - 1}(x) +  e^{-\frac{i\tau}{\eps^2}Q^\eps}\eta_2^{n - 1}(x)+ \mathcal{L}_{n-1}{\bf e}^{n-1}\right) \nonumber\\
		&\qquad\qquad+ \left(\eta_1^n(x) + e^{-\frac{i\tau}{\eps^2}Q^\eps} \eta_2^n(x)+ \mathcal{L}_n{\bf e}^n\right)&\nonumber\\
		&= ...&\nonumber\\
		&= e^{-i(n+1)\tau Q^\eps/\eps^2}{\bf e}^0(x) + \sum_{k = 0}^n  e^{-\frac{i(n-k)\tau}{\eps^2}Q^\eps}\left(\eta_1^k(x) +  e^{-\frac{i\tau}{\eps^2}Q^\eps}\eta_2^k(x)+\mathcal{L}_k{\bf e}^k(x)\right).&\label{eq:eta:lie}
	\end{align}
	 Since $\|\eta_1^k(x)\|_{H^1}\lesssim \tau^2 + \tau\|{\bf e}^n(x)\|_{H^1}$, $k = 0, 1, ..., n$, and $e^{-is/\eps^2Q^\eps}$ ($s\in\mathbb{R}$) preserves $H^1$ norm, we have from \eqref{eq:Lst}
	\begin{equation}
	\left\|\sum_{k = 0}^ne^{-\frac{i(n-k)\tau}{\eps^2}Q{^\eps}}\left(\eta_1^k(x)+\mathcal{L}_k{\bf e}^k\right)\right\|_{H^1}\lesssim \sum_{k = 0}^n\tau^2 + \sum_{k=0}^n\tau\|{\bf e}^k(x)\|_{H^1} \lesssim\tau + \tau\sum_{k=0}^n\|{\bf e}^k(x)\|_{H^1},
	\end{equation}
	which leads to
	\begin{equation}\label{eq:uniform:s1}
	\|{\bf e}^{n+1}(x)\|_{H^1} \lesssim \tau + \tau\sum_{k=0}^n\|{\bf e}^k(x)\|_{H^1} + \left\|\sum_{k = 0}^n e^{-\frac{i(n-k+1)\tau}{\eps^2}Q{^\eps}}{\eta}_2^k(x)\right\|_{H^1},\quad n\leq p.
	\end{equation}

	To analyze $\eta_2^n(x) = \int_0^\tau f_2^n(s)ds - \tau f_2^n(0)$, using \eqref{eq:pi+}, we can find $f_2^n(s)=O(\eps)$, e.g.	\begin{align*}
	(\Phi_+^\eps(t_n))^*\sigma_3 (\Phi_-^\eps(t_n))=&-\eps(\Phi_+^\eps(t_n))^*\sigma_3 (\mathcal{R}_1\Phi(t_n))+\eps(\mathcal{R}_1\Phi(t_n))^*\sigma_3(\Phi_-^\eps(t_n)),	\end{align*}
	and the other terms in $f_2^n(s)$ can be estimated similarly. As $\mathcal{R}_1:(H^{m})^2\to (H^{m-1})^2$ is uniformly bounded with respect to $\eps\in(0,1]$, we have (with detailed computations omitted)
	\begin{align}
\|f_2^n(\cdot)\|_{L^\infty([0,\tau];(H^1)^2)}\lesssim \eps\|\Phi(t_n)\|_{H^2}^3\lesssim \eps.
	\end{align}
	Noticing the assumptions of  Theorem \ref{thm:lie}, we obtain from \eqref{eq:f2nlie}
	\be\label{eq:S1_ineq}
	\|f_2^n(\cdot)\|_{L^\infty([0,\tau];(H^1)^2)}\lesssim \eps, \quad \|\partial_s(f_2^n)(\cdot)\|_{L^\infty([0,\tau];(H^1)^2)} \lesssim \eps/\eps^2 = 1/\eps,
	\ee
	which leads to
	\be\label{eq:S1_e1}
	\left\|\int_0^\tau f_2^n(s)\,ds-\tau f_2^n(0)\right\|_{H^1}\lesssim \tau\eps.
	\ee
	On the other hand, using Taylor expansion and the second inequality in \eqref{eq:S1_ineq}, we have
	\be\label{eq:S1_e2}
	\left\|\int_0^\tau f_2^n(s)\,ds-\tau f_2^n(0)\right\|_{H^1}\leq
	\frac{\tau^2}{2}\|\partial_sf_2^n(\cdot)\|_{L^\infty([0,\tau];(H^1)^2)} \lesssim \tau^2/\eps.	\ee
	Combining \eqref{eq:S1_e1} and \eqref{eq:S1_e2}, we arrive at
	\be
	\|\eta_2^n(x)\|_{H^1}\lesssim \min\{\tau\eps, \tau^2/\eps\}.
	\ee
	Then from \eqref{eq:uniform:s1}, we get for $n\leq p$
	\begin{align}
	\|{\bf e}^{n+1}(x)\|_{H^1}	\lesssim & n\tau^2 + n\min\{\tau\eps, \tau^2/\eps\} + \tau\sum_{k=0}^n\|{\bf e}^n(x)\|_{H^1}.
	\end{align}
	Using discrete Gronwall's inequality,  we have
	\be
	\|{\bf e}^{n+1}(x)\|_{H^1} \lesssim \tau + \min\{\tau\eps, \tau^2/\eps\},\quad n\leq p,
	\ee
 which shows that \eqref{eq:thm1}-\eqref{eq:thm2} hold for $n=p+1$. It can be checked that all the constants appearing in the estimates  depend only on $M_1,\lambda_1,\lambda_2, T$ and $\|\Phi(t,x)\|_{L^\infty([0,T];(H^3)^2)}$, and
 \be
 \|\Phi^{p+1}\|_{H^1}\leq \|\Phi(t_{p+1})\|_{H^1}+\|{\bf e}^{p+1}\|_{H^1}\leq M_1+C\sqrt{\tau}
 \ee
 for some $C=C(M_1,\lambda_1,\lambda_2, T,\|\Phi(t,x)\|_{L^\infty([0,T];(H^3)^2)})$. Choosing $\tau\leq\frac{1}{C^2}$ will justify \eqref{eq:indu} at $n=p+1$, which finishes the induction process, and the proof for Theorem \ref{thm:lie} is completed.
\end{proof}


\subsection{An improved error bound for non-resonant time steps}
The leading term in the NLDE \eqref{eq:NLDirac_nonmag} 
is $\frac{1}{\eps^2}\sigma_3\Phi$, suggesting that the solution behaves almost periodically in time with periods $2k\pi\eps^2$ ($k\in\mathbb{N}^*$, the periods of $e^{-i\sigma_3/\eps^2}$).
From numerical results, we observe that $S_1$ behave much better than the results in Theorem \ref{thm:lie} when $4\tau$ (which is derived from the proof) is not close to the leading temporal oscillation periods $2k\pi\eps^2$.  In fact, for given $0<\delta\leq1$, define
\be \label{tau_range}
\mathcal{A}_\delta(\eps) := \bigcup_{k = 0}^\infty\left[0.5\eps^2k\pi + 0.5\eps^2\arcsin\delta, 0.5\eps^2(k+1)\pi -0.5 \eps^2\arcsin\delta\right], \quad 0 < \eps \leq 1,
\ee
then when $\tau\in \mathcal{A}_\delta(\eps)$, i.e., when non-resonant time step sizes are chosen, the errors of $S_1$ can be improved.
To illustrate $\mathcal{A}_\delta(\eps)$ (compared to the linear case \cite{BCY}, the region of the resonant steps $\mathcal{A}_\delta^c(\eps) := \mathbb{R}^+\backslash \mathcal{A}_\delta(\eps)$  for fixed $\eps$ are doubled due to the cubic nonlinearity), we show in Figure \ref{fig:axis} for $\eps = 1$ and $\eps = 0.5$ with fixed $\delta = 0.15$.
\begin{figure}
	\centering
	\includegraphics[width=0.9\textwidth]{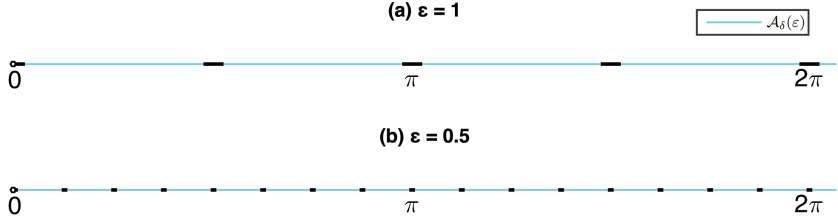}
	\caption{Illustration of the non-resonant time step $\mathcal{A}_\delta(\eps)$ with $\delta = 0.15$ for
		(a) $\eps = 1$ and (b) $\eps = 0.5$.}
	\label{fig:axis}
\end{figure}

For $\tau\in \mathcal{A}_\delta(\eps)$, we can derive improved uniform error bounds for $S_1$ as follows.
\begin{theorem}\label{thm:lie2}
	Let $\Phi^n(x)$ be the
	numerical approximation obtained from $S_1$ \eqref{eq:S1}.
	If the time step size $\tau$ is non-resonant, i.e. there exists $0 < \delta \leq 1$, such that $\tau\in \mathcal{A}_\delta(\eps)$,  then under the assumptions $(A)$ and $(B)$ with $m =1$,	we have an improved uniform error bound for small enough $\tau>0$
	\be
	\|{\bf e}^n(x)\|_{H^1}\lesssim_\delta \tau, \quad 0\le n\le\frac{T}{\tau}.
	\ee
\end{theorem}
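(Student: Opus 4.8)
The plan is to reuse the exact error representation \eqref{eq:eta:lie} and to show that, in the non-resonant regime, the one term responsible for the degradation in Theorem \ref{thm:lie} can be upgraded from $O(\min\{\eps,\tau/\eps\})$ to $O_\delta(\tau)$. Writing \eqref{eq:eta:lie} as
\[
{\bf e}^{n+1}(x) = \sum_{k=0}^n e^{-\frac{i(n-k)\tau}{\eps^2}Q^\eps}\left(\eta_1^k(x) + \mathcal{L}_k{\bf e}^k(x)\right) + \mathbf{S}^n(x), \quad \mathbf{S}^n(x) := \sum_{k=0}^n e^{-\frac{i(n-k+1)\tau}{\eps^2}Q^\eps}\eta_2^k(x),
\]
the first sum is handled exactly as in the proof of Theorem \ref{thm:lie} and contributes $\lesssim \tau + \tau\sum_{k=0}^n\|{\bf e}^k(x)\|_{H^1}$. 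Hence everything reduces to proving $\|\mathbf{S}^n(x)\|_{H^1}\lesssim_\delta\tau$, after which the discrete Gronwall inequality (wrapped, as in Theorem \ref{thm:lie}, in an induction maintaining $\|\Phi^n\|_{H^1}\le M_1+1$) yields $\|{\bf e}^n(x)\|_{H^1}\lesssim_\delta\tau$.

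First I would expose the oscillatory structure of $\mathbf{S}^n$. From \eqref{eq:f2nlie} one writes $\eta_2^k = \sum_{m\in\{\pm2,\pm4\}}c_m\,\mathbf{H}_m^k$, where $c_m=\int_0^\tau e^{ims/\eps^2}\,ds-\tau$ satisfies $|c_m|\lesssim\min\{\tau,\tau^2/\eps^2\}\lesssim\tau$ and each $\mathbf{H}_m^k$ is one of the $\Pi_\pm^\eps$-projected cubic expressions in $\Phi_\pm^\eps(t_k)$ appearing in \eqref{eq:f2nlie}--\eqref{eq:g2}. Decomposing the propagator by \eqref{eq:dec:ev}, a $\Pi_\pm^\eps$-projected argument retains a single scalar fast phase $e^{\mp i(n-k+1)\tau/\eps^2}$ together with the $H^1$-isometry $e^{\pm i(n-k+1)\tau\mathcal{D}^\eps}$. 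The decisive point is to also factor the intrinsic oscillation out of the exact solution: setting $\Psi_\pm(t):=e^{\pm it/\eps^2}\Phi_\pm^\eps(t)$ and using the filtered equations $i\partial_t\Phi_\pm^\eps=\frac{1}{\eps^2}\sqrt{Id-\eps^2\Delta}\,\Phi_\pm^\eps+\Pi_\pm^\eps(\mathbf{F}(\Phi)\Phi)$, one checks $\|\partial_t\Psi_\pm\|_{H^1}\lesssim\|\mathcal{D}^\eps\Psi_\pm\|_{H^1}+1\lesssim1$, so the samples $\Psi_\pm^k:=\Psi_\pm(t_k)$ are slowly varying, $\|\Psi_\pm^{k+1}-\Psi_\pm^k\|_{H^1}\lesssim\tau$. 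Consequently each $\mathbf{H}_m^k$ factors \emph{exactly} as $e^{-i\rho_m t_k/\eps^2}\tilde{\mathbf{H}}_m^k$ with integer $\rho_m$ and $\tilde{\mathbf{H}}_m^k$ a product of $\Psi_\pm^k$'s that is slowly varying in $H^1$ and bounded in $H^3$ (because $H^3(\mathbb{R})$ is an algebra). Combining the propagator's scalar phase with $e^{-i\rho_m t_k/\eps^2}$, the mode-$m$ summand carries the total $k$-phase $e^{\mp4ik\tau/\eps^2}$ for $m=\pm4$ and $e^{\mp2ik\tau/\eps^2}$ for $m=\pm2$; this is precisely the ``$4\tau$'' mechanism behind the resonant set \eqref{tau_range}.

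Then I would estimate each mode by summation by parts. Writing the summand as $P_k W_k$ with $P_k=e^{\mp i\mu k\tau/\eps^2}$ ($\mu\in\{2,4\}$) and $W_k=e^{\pm i(n-k+1)\tau\mathcal{D}^\eps}\tilde{\mathbf{H}}_m^k$, the non-resonance $\tau\in\mathcal{A}_\delta(\eps)$ gives $|\sin(2\tau/\eps^2)|\ge\delta$, hence also $|\sin(\tau/\eps^2)|\gtrsim\delta$, so the partial sums obey $\big|\sum_{l=0}^j P_l\big|\le 2/|1-P_1|\lesssim_\delta1$. Abel summation $\sum_k P_kW_k=S_nW_n-\sum_k S_k(W_{k+1}-W_k)$ together with $\|W_n\|_{H^1}\lesssim1$ and $\|W_{k+1}-W_k\|_{H^1}\lesssim\tau$ (the latter from the slow variation of $\tilde{\mathbf{H}}_m^k$ and from $\|(e^{\mp i\tau\mathcal{D}^\eps}-Id)\tilde{\mathbf{H}}_m^k\|_{H^1}\lesssim\tau\|\tilde{\mathbf{H}}_m^k\|_{H^3}$, which needs only $\Phi\in H^3$) yields $\|\sum_k P_kW_k\|_{H^1}\lesssim_\delta1$. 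Multiplying by $|c_m|\lesssim\tau$ and summing over the four modes gives $\|\mathbf{S}^n(x)\|_{H^1}\lesssim_\delta\tau$, as required; feeding this back into the recursion and applying discrete Gronwall closes the estimate.

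The main obstacle is the middle step: showing that, after the projector/phase bookkeeping, every mode's summand genuinely reduces to a pure fast phase $e^{\mp i\mu k\tau/\eps^2}$ times a slowly varying $H^1$ amplitude whose total variation over $[0,T]$ is $O(1)$. This hinges on the exact factorization $\Phi_\pm^\eps(t_k)=e^{\mp it_k/\eps^2}\Psi_\pm^k$ with $\Psi_\pm$ controlled through the filtered equations, on the preservation of slow variation under the cubic nonlinearity (via the algebra property of $H^3$), and on having just enough regularity ($m=1$, i.e. $\Phi\in H^3$) to absorb the bounded operator $\mathcal{D}^\eps$ produced when differencing $W_{k+1}-W_k$. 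Getting the frequency count exactly right --- so that the binding resonance is $2\tau/\eps^2\in\pi\mathbb{Z}$, matching the definition of $\mathcal{A}_\delta(\eps)$ --- is where the real work lies; once the geometric-sum estimate is in place, the concluding Gronwall argument is routine.
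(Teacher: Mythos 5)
Your proposal is correct and follows essentially the same route as the paper's proof: you isolate the $\eta_2^k$ sum, factor the fast scalar phases out of both the propagator (via \eqref{eq:dec:ev}) and the solution (via the filtered variables $\Psi_\pm=e^{\pm it/\eps^2}\Phi_\pm^\eps$, which are exactly the paper's $\widetilde{\Phi}_\pm^\eps$ in \eqref{eq:tildephi}), verify slow variation of the amplitudes using the filtered equations and the $H^3\to H^1$ boundedness of $\mathcal{D}^\eps$, and close with Abel summation against the geometric sums bounded by $O(1/\delta)$ under $|\sin(2\tau/\eps^2)|\ge\delta$, followed by discrete Gronwall. The frequency bookkeeping (modes $\pm2,\pm4$, binding resonance $2\tau/\eps^2\in\pi\mathbb{Z}$) matches the paper's $\mathcal{R}_{2,\pm}$, $\mathcal{R}_{4,\pm}$ decomposition in \eqref{eq:S1_eta2}--\eqref{eq:mar:1}.
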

\begin{proof} First of all, the assumptions of Theorem \ref{thm:lie} are satisfied in Theorem \ref{thm:lie2}, so we can directly use the results of Theorem \ref{thm:lie}. In particular, the numerical
solution $\Phi^n$ are bounded in $H^1$ as $\|\Phi^n\|_{H^1}\leq M_1+1$ \eqref{eq:indu} and Lemma \ref{lemma:lie} for local truncation error holds.

We start from \eqref{eq:uniform:s1}.
	The improved estimates rely on the cancellation phenomenon for the $\eta_2^k$ term in \eqref{eq:uniform:s1}.   From Lemma \ref{lemma:lie}, \eqref{eq:f2nlie}, \eqref{eq:g1} and \eqref{eq:g2}, we can write $\eta_2^k(x)$ as	
	\begin{align}\label{eq:S1_eta2}
	\eta_2^k(x):= &p_1(\tau)\mathcal{R}_{4,-}(\Phi_+^\eps(t_k),\Phi_-^\eps(t_k))-\overline{p_1(\tau)}\mathcal{R}_{4,+}(\Phi_+^\eps(t_k),\Phi_-^\eps(t_k))\\
&+p_2(\tau)\mathcal{R}_{2,-}(\Phi_+^\eps(t_k),\Phi_-^\eps(t_k))-\overline{p_2(\tau)}\mathcal{R}_{2,+}(\Phi_+^\eps(t_k),\Phi_-^\eps(t_k)),
	&\nonumber
	\end{align}
	where  $\mathcal{R}_{j,\pm}(\Phi_+^\eps,\Phi_-^\eps)$ ($j=2,4, \Phi_+^\eps,\Phi_-^\eps: \mathbb{R}\to \mathbb{C}^2$) are as follows
	\be\label{eq:mar:1}
	\begin{split}
&\mathcal{R}_{4,-}(\Phi_+^\eps,\Phi_-^\eps)=\Pi_-^\eps\left(\mathbf{g}_1(\Phi_+^\eps,\Phi_-^\eps)\Phi_+^\eps\right),
\quad \mathcal{R}_{4,+}(\Phi_+^\eps,\Phi_-^\eps)=\Pi_+^\eps\left(\overline{\mathbf{g}_1(\Phi_+^\eps,\Phi_-^\eps)}\Phi_-^\eps\right), \\
	& \mathcal{R}_{2,-}(\Phi_+^\eps,\Phi_-^\eps)=\Pi_+^\eps\left(\mathbf{g}_1(\Phi_+^\eps,\Phi_-^\eps)\Phi_+^\eps\right) + \Pi_-^\eps\left(\mathbf{g}_2(\Phi_+^\eps,\Phi_-^\eps)\Phi_+^\eps+\mathbf{g}_1(\Phi_+^\eps,\Phi_-^\eps)\Phi_-^\eps \right),\\
& \mathcal{R}_{2,+}(\Phi_+^\eps,\Phi_-^\eps)=\Pi_-^\eps\left(\overline{\mathbf{g}_1(\Phi_+^\eps,\Phi_-^\eps)}\Phi_-^\eps\right) + \Pi_+^\eps\left(\mathbf{g}_2(\Phi_+^\eps,\Phi_-^\eps)\Phi_-^\eps+\overline{\mathbf{g}_1(\Phi_+^\eps,\Phi_-^\eps)}\Phi_+^\eps \right),
	\end{split}
	\ee
	with $\mathbf{g}_1,\mathbf{g}_2$  given in  \eqref{eq:g1}-\eqref{eq:g2}  (Lemma \ref{lemma:lie}), and
	\begin{align}\label{eq:p12}
	p_1(\tau)=-i\left(\int_0^\tau e^{-\frac{4si}{\eps^2}}\,ds-\tau\right),\quad
		p_2(\tau)=-i\left(\int_0^\tau e^{-\frac{2si}{\eps^2}}\,ds-\tau\right).
	\end{align}
	It is obvious that $|p_1(\tau)|,|p_2(\tau)|\leq 2\tau$ and \eqref{eq:uniform:s1} implies that
	\begin{align}\label{eq:ebd:1}
	\|{\bf e}^{n+1}(x)\|_{H^1} \lesssim \tau + \tau\sum_{k=0}^n\|{\bf e}^k(x)\|_{H^1} +\tau\sum_{\sigma=\pm,j=2,4} \left\|\sum_{k = 0}^n e^{-\frac{i(n-k+1)\tau}{\eps^2}Q{^\eps}}\mathcal{R}_{j,\sigma}(\Phi_+^\eps(t_k),\Phi_-^\eps(t_k))\right\|_{H^1}.
	\end{align}
	To proceed, we introduce $\widetilde{\Phi}_\pm^\eps(t)$ as
	\be \label{eq:tildephi}
	\widetilde{\Phi}_\pm^\eps(t):=\tilde{\Phi}_\pm^\eps(t,x)=e^{\pm\frac{it}{\eps^2}}\Phi_\pm^\eps(t,x), \quad 0\leq t\leq T.
	\ee
	Since $\Phi(t,x)$ solves the NLDE \eqref{eq:NLDirac1d2d} (or \eqref{eq:NLDirac_nonmag}), noticing the properties of $Q^\eps$ as in \eqref{eq:Teps} and \eqref{eq:Deps} and the $L^2$ orthogonal projections $\Pi_{\pm}^\eps$, it is straightforward to compute that
	\be
	i\partial_t{\widetilde{\Phi}_\pm^\eps(t)}=\mathcal{D}^\eps \widetilde{\Phi}_{\pm}^\eps(t)+\Pi_{\pm}^\eps \left(e^{\mp\frac{it}{\eps^2}}\mathbf{F}(\Phi(t))\Phi(t)\right),
	\ee
	and the assumptions of Theorem \ref{thm:lie} would yield
	\be\label{eq:tildephi:bd}
	\|\tilde{\Phi}_{\pm}^\eps(\cdot)\|_{L^\infty([0,T];(H^3)^2)}\lesssim 1,\quad
	\|\partial_t\tilde{\Phi}_{\pm}^\eps(\cdot)\|_{L^\infty([0,T];(H^1)^2)}\lesssim 1.
	\ee
	Now, we can deal with the terms involving $\mathcal{R}_{j,\pm}$ ($j=2,4$) in \eqref{eq:mar:1}.

{\it For $\mathcal{R}_{4,-}$:}  By direct computation, we  get $\mathcal{R}_{4,-}(\Phi_+^\eps(t_k),\Phi_-^\eps(t_k))=e^{-\frac{3it_k}{\eps^2}}\mathcal{R}_{4,-}(\widetilde{\Phi}_+^\eps(t_k),\widetilde{\Phi}_-^\eps(t_k))$.
In view of \eqref{eq:dec:ev} and \eqref{eq:mar:1}, we have for $0\leq k\leq n\leq\frac{T}{\tau}-1$,
\begin{align}\label{eq:r41}
e^{-\frac{i(n-k+1)\tau}{\eps^2}Q^\eps}\mathcal{R}_{4,-}(\Phi_+^\eps(t_k),\Phi_-^\eps(t_k))
=e^{\frac{i(n+1-4k)\tau}{\eps^2}} e^{i(t_{n+1}-t_k)\mathcal{D}^\eps}\mathcal{R}_{4,-}(\widetilde{\Phi}_+^\eps(t_k),\widetilde{\Phi}_-^\eps(t_k)).
\end{align}
Denoting
\be
A(t):=A(t,x)=e^{-it\mathcal{D}^\eps}\mathcal{R}_{4,-}(\widetilde{\Phi}_+^\eps(t),\widetilde{\Phi}_-^\eps(t)),\quad 0\leq t\leq T,
\ee
and noticing that $\partial_tA(t)=-ie^{-it\mathcal{D}^\eps}\mathcal{D}^\eps\mathcal{R}_{4,-}(\widetilde{\Phi}_+^\eps(t),\widetilde{\Phi}_-^\eps(t))+e^{-it\mathcal{D}^\eps}\partial_t\mathcal{R}_{4,-}(\widetilde{\Phi}_+^\eps(t),\widetilde{\Phi}_-^\eps(t))$,
 we can derive from \eqref{eq:tildephi:bd} and the fact that $\mathcal{D}^\eps:(H^m)^2\to (H^{m-2})^2$ is uniformly bounded w.r.t $\eps$,
\begin{align}
\|A(t_k)-A(t_{k-1})\|_{H^1}\lesssim& \tau\left[\|\mathcal{R}_{4,-}(\widetilde{\Phi}_+^\eps(t_k),\widetilde{\Phi}_-^\eps(t_k))\|_{H^3}+\|\partial_t\mathcal{R}_{4,-}(\widetilde{\Phi}_+^\eps(t),\widetilde{\Phi}_-^\eps(t))\|_{L^\infty([0,T];(H^1)^2)}\right]\nonumber\\
\lesssim&\tau, \label{eq:bda} \quad 1\leq k\leq\frac{T}{\tau}.
\end{align}
Using \eqref{eq:bda}, \eqref{eq:r41},  $\|A(t)\|_{L^\infty([0,T];(H^1)^2)}\lesssim1$, the property that  $e^{it\mathcal{D}^\eps}$ preserves $H^1$ norm, summation by parts formula and triangle inequality, we have
\begin{align}\label{eq:sm1}
&\left\|\sum_{k = 0}^n e^{-\frac{i(n-k+1)\tau}{\eps^2}Q^\eps}\mathcal{R}_{4,-}(\Phi_+^\eps(t_k),\Phi_-^\eps(t_k))\right\|_{H^1}=\left\|\sum_{k = 0}^n e^{-\frac{i4k\tau}{\eps^2}}A(t_k)\right\|_{H^1}
\\
&\leq \left\|\sum_{k = 0}^{n-1}\theta_{k}(A(t_k)-A(t_{k+1}))\right\|_{H^1}
+\|\theta_nA(t_n)\|_{H^1}\lesssim \tau \left|\sum_{k = 0}^{n-1}\theta_{k}\right|+1,\nonumber
\end{align}	
with
\be\label{eq:ser1}
\theta_k=\sum_{j=0}^ke^{-\frac{i4j\tau}{\eps^2}}=\frac{1-e^{-\frac{i4(k+1)\tau}{\eps^2}}}{1-e^{-\frac{i4\tau}{\eps^2}}},\quad k\ge0,\quad \theta_{-1}=0.
\ee	
For $\tau\in\mathcal{A}_\delta(\eps)$	 \eqref{tau_range}, we have $|1-e^{-\frac{i4\tau}{\eps^2}}|=|2\sin(2\tau/\eps^2)|\ge2\delta$ and $|\theta_k|\leq\frac{2}{2\delta}=1/\delta$, and \eqref{eq:sm1} leads to
\be\label{eq:sum1}
\left\|\sum_{k = 0}^n e^{-\frac{i(n-k+1)\tau}{\eps^2}Q^\eps}\mathcal{R}_{4,-}(\Phi_+^\eps(t_k),\Phi_-^\eps(t_k))\right\|_{H^1}\lesssim \frac{n\tau+1}{\delta}\lesssim\frac{1}{\delta}.
\ee

{\it For $\mathcal{R}_{2,-}$:} Similar to the case $\mathcal{R}_{4,-}$ (slightly different), it is straightforward to show that
\begin{align}
e^{-\frac{i(n-k+1)\tau}{\eps^2}Q^\eps}\mathcal{R}_{2,-}(\Phi_+^\eps(t_k),\Phi_-^\eps(t_k))
=e^{\frac{i(n+1-2k)\tau}{\eps^2}}\left[ e^{-it_{n+1}\mathcal{D}^\eps}B(t_k)+e^{it_{n+1}\mathcal{D}^\eps}C(t_k)\right],
\end{align}
where
\begin{align}
B(t)=&e^{it\mathcal{D}^\eps}\Pi_+^\eps\left(\mathbf{g}_1(\widetilde{\Phi}_+^\eps(t),\widetilde{\Phi}_-^\eps(t))\widetilde{\Phi}^\eps_+(t)\right) ,\\
C(t)=&e^{-it\mathcal{D}^\eps}\Pi_-^\eps\left(\mathbf{g}_2(\widetilde{\Phi}_+^\eps(t),\widetilde{\Phi}_-^\eps(t))\widetilde{\Phi}_+^\eps(t)+\mathbf{g}_1(\widetilde{\Phi}_+^\eps(t),\widetilde{\Phi}_-^\eps(t))\Phi_-^\eps(t) \right).
\end{align}
$B(t)$ and $C(t)$ satisfy the same estimates as $A(t)$ \eqref{eq:bda}. Therefore, similar procedure  will give
\begin{align}\label{eq:sm2}
&\left\|\sum_{k = 0}^n e^{-\frac{i(n-k+1)\tau}{\eps^2}Q^\eps}\mathcal{R}_{2,-}(\Phi_+^\eps(t_k),\Phi_-^\eps(t_k))\right\|_{H^1}\leq
\left\|\sum_{k = 0}^n e^{-\frac{i2k\tau}{\eps^2}}B(t_k)\right\|_{H^1}+\left\|\sum_{k = 0}^n e^{-\frac{i2k\tau}{\eps^2}}C(t_k)\right\|_{H^1}
\\
&\lesssim \tau \left|\sum_{k = 0}^{n-1}\widetilde{\theta}^{k}\right|+1,\nonumber
\end{align}
with $\widetilde{\theta}_k=\sum_{j=0}^ke^{-\frac{i2j\tau}{\eps^2}}=\frac{1-e^{-\frac{i2(k+1)\tau}{\eps^2}}}{1-e^{\frac{-i2\tau}{\eps^2}}},\quad k\ge0,\quad \widetilde{\theta}_{-1}=0$.
For $\tau\in\mathcal{A}_\delta(\eps)$	 \eqref{tau_range}, we know $|1-e^{\frac{-i2\tau}{\eps^2}}|=|2\sin(\tau/\eps^2)|\ge|4\sin(2\tau/\eps^2)|\ge4\delta$ and $|\widetilde{\theta}_k|\leq\frac{2}{4\delta}=2/\delta$, which shows	
\begin{align}\label{eq:sum2}
&\left\|\sum_{k = 0}^n e^{-\frac{i(n-k+1)\tau}{\eps^2}Q^\eps}\mathcal{R}_{2,-}(\Phi_+^\eps(t_k),\Phi_-^\eps(t_k))\right\|_{H^1}
\lesssim \tau \left|\sum_{k = 0}^{n-1}\widetilde{\theta}^{k}\right|+1\lesssim\frac{1}{\delta}.
\end{align}

{\it For $\mathcal{R}_{4,+}$ and $\mathcal{R}_{2,+}$:} It is easy to see that the $\mathcal{R}_{4,+}$ and $\mathcal{R}_{2,+}$ terms in \eqref{eq:ebd:1} can be bounded exactly the same as
the $\mathcal{R}_{4,-}$ and $\mathcal{R}_{2,-}$ terms, respectively.

Finally, combining \eqref{eq:ebd:1}, \eqref{eq:sum1}, \eqref{eq:sum2} and above observations, we have for $\tau\in\mathcal{A}_\delta(\eps)$,
\be
\|{\bf e}^{n+1}(x)\|_{H^1} \lesssim \frac{\tau}{\delta} + \tau\sum_{k=0}^n\|{\bf e}^k(x)\|_{H^1} ,\quad 0\leq n\leq \frac{T}{\tau}-1,
\ee
and discrete Gronwall inequality yields $\|{\bf e}^{n+1}(x)\|_{H^1} \lesssim \frac{\tau}{\delta}$ ($0\leq n\leq \frac{T}{\tau}-1$) for small enough $\tau\in\mathcal{A}_\delta(\eps)$. The proof is completed.
\end{proof}


\subsection{Numerical results}
To verify our error bounds in Theorems \ref{thm:lie} and \ref{thm:lie2}, we show a numerical example here. In this example and all the numerical examples later, we always use Fourier pseudospectral method for spatial discretization.

As a common practice when applying the Fourier pseudospectral method, in our numerical simulations, we truncate the whole space onto a sufficiently large bounded domain $\Omega = (a, b)$, and assume periodic boundary conditions. The mesh size is chosen as $h := \triangle x = \frac{b - a}{M}$ with $M$ being an even positive integer. Then the grid points can be denoted as $x_j := a + jh$, for $j = 0, 1, ..., M$.

In this example, we choose the electric potential $V(x)\equiv 0$. For the nonlinearity \eqref{eq:F}, we take $\lambda_1 = 1$, $\lambda_2 = 0$, i.e.
\be\label{eq:numer_F}
\mathbf{F}(\Phi) = (\Phi^*\sigma_3\Phi)\sigma_3,
\ee
and the initial data  $\Phi_0=(\phi_1,\phi_2)$  in \eqref{eq:initial} is given as
\be\label{eq:numer_ini}
\phi_1(0, x) =  e^{-\frac{x^2}{2}}, \quad \phi_2(0, x) = e^{-\frac{(x - 1)^2}{2}}, \quad x\in\mathbb{R}.
\ee

As only the temporal errors are concerned in this paper, during the computation, the spatial mesh size is always set to be $h = \frac{1}{16}$ so that the spatial errors are negligible.

We first take resonant time steps, that is, for small enough chosen $\eps$, there is a positive $k_0$, such that $\tau = \frac{1}{2}k_0\eps^2\pi$, to check the error bounds in Theorem \ref{thm:lie}. The bounded computational domain is taken as $\Omega = (-32, 32)$, i.e., $a = -32$ and $b = 32$.
Because  the  exact solution is unknown, for comparison, we use a numerical `exact' solution generated by the second-order time-splitting method ($S_2$), which will be introduced later, with a very fine time step size $\tau_e = 2\pi\times10^{-6}$.

\begin{table}[htp]
	\def\temptablewidth{1\textwidth}
	\caption{Discrete $H^1$ temporal errors $e^{\eps, \tau}(t = 2\pi)$ for the wave function with resonant time step size, $S_1$ method. }
	{\rule{\temptablewidth}{1pt}}
	\begin{tabular*}{\temptablewidth}{@{\extracolsep{\fill}}ccccccc}
		$e^{\eps, \tau}(t = 2\pi)$  & $\tau_0 = \pi/4$ & $\tau_0 / 4$ & $\tau_0 / 4^2$ & $\tau_0 / 4^3$
		& $\tau_0 / 4^4$ & $\tau_0 / 4^5$ \\ \hline
		$\eps_0=1$ & 4.18 &	\textbf{7.09E-1} &	1.69E-1 &	4.17E-2 &	1.04E-2 &	2.59E-3\\
		order & -- & \textbf{1.28} &	1.04 &	1.01 &	1.00 &	1.00\\\hline
		$\eps_0/2$ & 2.54 &	6.37E-1 &	\textbf{1.44E-1} &	3.55E-2 &	8.84E-3 &	2.21E-3\\
		order & -- & 1.00 &	\textbf{1.07} &	1.01 &	1.00 &	1.00\\\hline
		$\eps_0/2^2$ & 2.25 & 1.15 &	1.47E-1 &	\textbf{3.53E-2} &	8.73E-3 &	2.18E-3\\
		order & -- & 0.49 &	1.48 &	\textbf{1.03} &	1.01 &	1.00\\\hline
		$\eps_0/2^3$ & 2.29 &	6.69E-1 &	6.56E-1 &	3.62E-2 &	\textbf{8.84E-3} &	2.20E-3\\
		order & -- & 0.89 &	0.01 &	2.09 &	\textbf{1.02} &	1.00\\\hline
		$\eps_0/2^4$ & 2.32 &	5.33E-1 &	3.24E-1	& 3.49E-1 &	8.98E-3	 & \textbf{2.22E-3}\\
		order & -- & 1.06 &	0.36 &	-0.05 &	2.64 &	\textbf{1.01}\\\hline
		$\eps_0/2^5$ & 2.34 &	\textbf{5.29E-1} &	1.76E-1 &	1.70E-1 &	1.79E-1 &	2.24E-3\\
		order & -- & \textbf{1.07} &	0.79 &	0.03 &	-0.04 &	3.16 \\\hline
		$\eps_0/2^7$ & 2.35 &	5.57E-1 &	\textbf{1.30E-1} &	4.46E-2 &	4.28E-2 &	4.49E-2\\
		order & -- & 1.04 &	\textbf{1.05} &	0.77 &	0.03 &	-0.03\\\hline
		$\eps_0/2^9$ & 2.35 &	5.68E-1 &	1.38E-1 &	\textbf{3.26E-2} &	1.12E-2 &	1.07E-2\\
		order & -- & 1.02 &	1.02 &	\textbf{1.04} &	0.77 &	0.03\\\hline
		$\eps_0/2^{11}$ & 2.35 &	5.71E-1 &	1.41E-1 &	3.45E-2 &	\textbf{8.14E-3} &	2.80E-3\\
		order & -- & 1.02 &	1.01 &	1.02 & 	\textbf{1.04} &	0.77 \\\hline
		$\eps_0/2^{13}$ & 2.35 &	5.72E-1 &	1.42E-1 &	3.53E-2 &	8.64E-3 &	\textbf{2.04E-3}\\
		order & -- & 1.02 &	1.00 &	1.00 &	1.02 &	\textbf{1.04} \\\hline\hline
		$\max\limits_{0<\eps\leq 1}e^{\eps, \tau}(t=2\pi)$ & 4.18 &	1.15 &	6.56E-1 &	3.49E-1 &	1.79E-1 &	9.07E-2\\
		order & -- & 0.93 &	0.40 &	0.45 &	0.48 &	0.49
	\end{tabular*}
	{\rule{\temptablewidth}{1pt}}
	\label{table:NLDirac_S1}
\end{table}

To display the numerical results, we introduce the discrete $H^1$ errors of the numerical solution. Let $\Phi^n = (\Phi_0^n, \Phi_1^n, ..., \Phi_{M-1}^n,\Phi_M^n)^T$ be the numerical solution obtained by a numerical method with given $\eps$, time step size $\tau$ as well as the fine mesh size
$h$ at time $t = t_n$, and $\Phi(t, x)$ be the exact solution, then the discrete $H^1$ error is defined as
\be\label{eq:discrete_H1}
e^{\eps, \tau}(t_n) = \|\Phi^n - \Phi(t_n, \cdot)\|_{H^1} = \sqrt{h\sum_{j = 0}^{M - 1}|\Phi(t_n, x_j) - \Phi_j^n|^2 + h\sum_{j=0}^{M-1}|\Phi'(t_n, x_j) - (\Phi')_j^n|^2},
\ee
where
\be
(\Phi')_j^n = i\sum_{l = -M/2}^{M/2-1}\mu_l\widehat{\Phi}_l^ne^{i\mu_l(x_j-a)}, \quad j = 0, 1, ..., M-1,
\ee
with $\mu_l$, $\widehat{\Phi}_l^n\in\mathbb{C}^2$ defined as
\be
\mu_l = \frac{2l\pi}{b-a}, \quad \widehat{\Phi}_l^n = \frac{1}{M}\sum_{j=0}^{M-1}\Phi_j^ne^{-i\mu_l(x_j-a)}, \quad l = -\frac{M}{2}, ..., \frac{M}{2} - 1,
\ee
and $\Phi'(t_n, x_j)$ is defined similarly. Then $e^{\eps, \tau}(t_n)$ should be close to the $H^1$ errors in Theorem \ref{thm:lie} for fine spatial mesh sizes $h$.

Table \ref{table:NLDirac_S1} shows the  temporal errors $e^{\eps, \tau}(t = 2\pi)$ with different $\varepsilon$ and time step size $\tau$ for $S_1$.

The last two rows of Table \ref{table:NLDirac_S1} show the largest error of each column for fixed $\tau$. The errors exhibit $1/2$ order convergence, which coincides well with Theorems \ref{thm:lie}.
More specifically, we can observe when $\tau\gtrsim \eps$ (below the lower bolded diagonal line), there is first order convergence, which agrees with the error bound $\|\Phi(t_n, x) - \Phi^n(x)\|_{H^1}\lesssim \tau +\eps$. When $\tau\lesssim\eps^2$ (above the upper bolded diagonal line), there is also first order convergence, which matches the other error bound $\|\Phi(t_n, x) - \Phi^n(x)\|_{H^1}\lesssim \tau +\tau/ \eps$.

To support the improved uniform error bound in Theorem \ref{thm:lie2}, we further test the discrete errors using non-resonant time steps, i.e., we choose $\tau\in\mathcal{A}_\delta(\eps)$ for some given $\eps$ and fixed $0<\delta \leq1$. In this case, the bounded computational domain is set as $\Omega = (-16, 16)$.

For comparison, the numerical `exact' solution is computed by the second-order time-splitting method ($S_2$) with a very small time step size $\tau_e = 8\times10^{-6}$. 

Figure \ref{fig:NLDirac_S1_nrs} shows the errors $e^{\eps, \tau}(t = 4)$ with different $\varepsilon$ and time step size $\tau$ for $S_1$.

\begin{figure}
	\centering
	\includegraphics[width=0.45\textwidth]{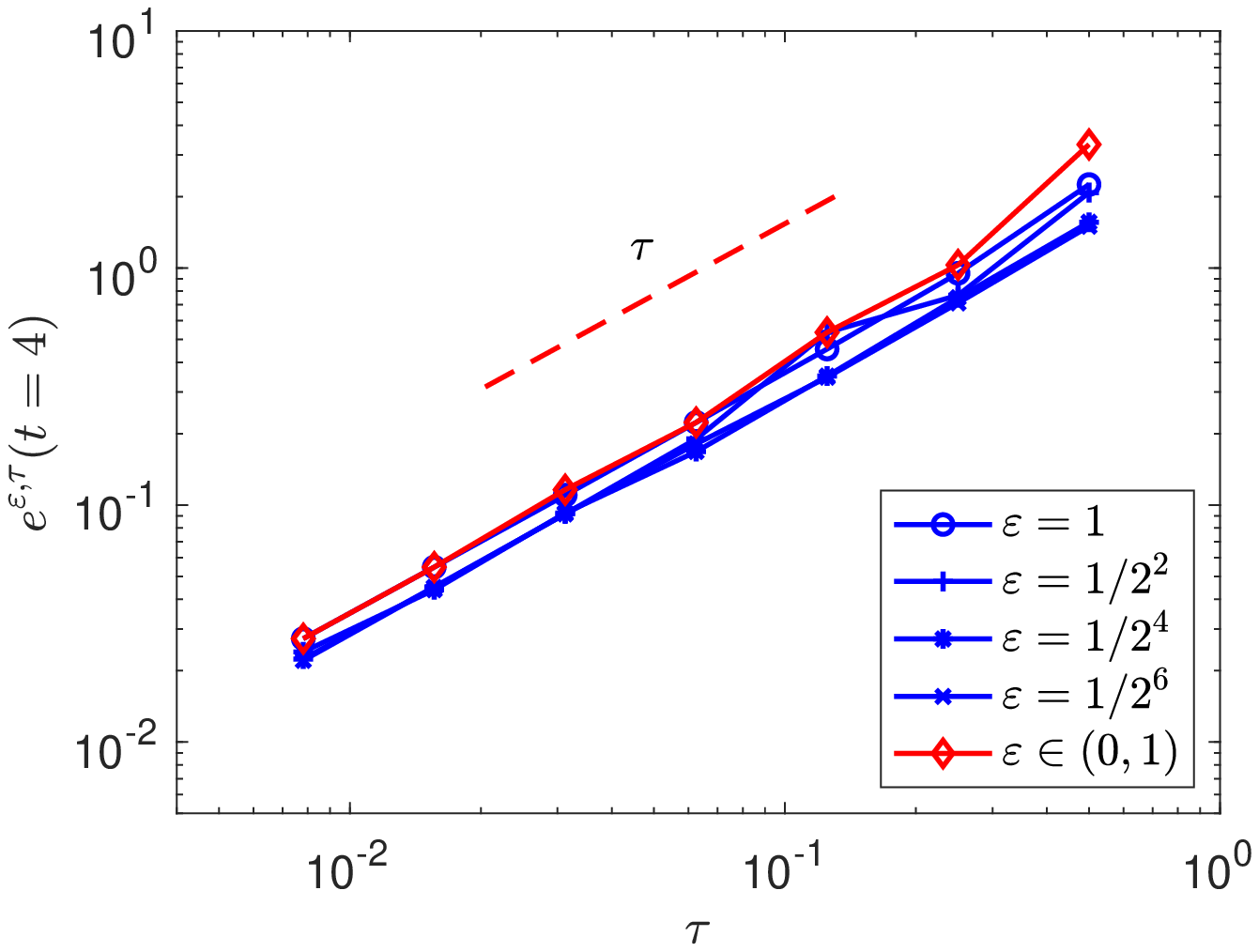}
	\includegraphics[width=0.45\textwidth]{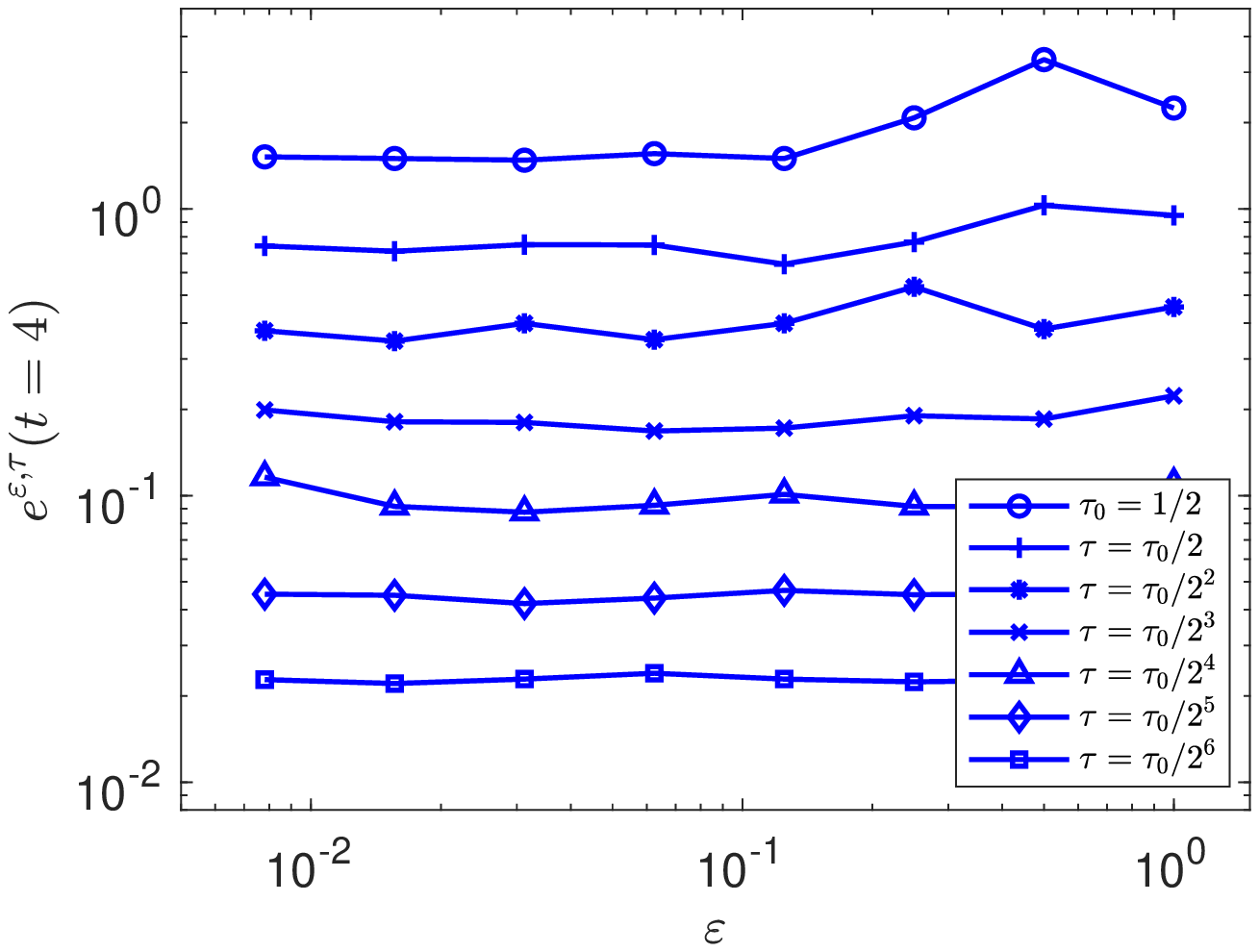}
	\caption{The discrete $H^1$ error $e^{\eps, \tau}(t = 4)$ with respect to $\tau$ and $\eps$ with non-resonant time step sizes, $S_1$ method.}
	\label{fig:NLDirac_S1_nrs}
\end{figure}

From  the left part of Fig. \ref{fig:NLDirac_S1_nrs}, we could see that for each $\eps\in (0, 1]$, there is always first order convergence in $\tau$ for non-resonant time steps. From the right part, we find  that for fixed time step size $\tau$, i.e., for each line in the figure, the error $e^{\eps,\tau}(t = 4)$ does not change much with different $\eps$. This verifies the temporal uniform first order convergence for $S_1$ with non-resonant time step size, as stated in Theorem \ref{thm:lie2}.

Through the results of this example, we successfully validate the uniform error bounds for $S_1$ in Theorems \ref{thm:lie} \& \ref{thm:lie2}.

\section{Extension to the second-order splitting method}\setcounter{equation}{0}
In this section, we extend the results in the previous section
to the second-order Strang splitting method.

Applying the discrete-in-time second-order splitting (Strang splitting, $S_2$) to \eqref{eq:NLDirac_nonmag}, we have the numerical method as \cite{BCJY, Strang}
\be\label{eq:S2}
\Phi^{n + 1}(x) = e^{-\frac{i\tau}{2\eps^2}Q^\eps}e^{-i\tau \left[V(x) + \mathbf{F}\left(e^{-\frac{i\tau}{2\eps^2}
Q^\eps}\Phi^n(x)\right)\right]}e^{-\frac{i\tau}{2\eps^2}
Q^\eps}\Phi^n(x),
\ee
with  $\Phi^0(x) = \Phi_0(x)$. We write the numerical propagator for $S_2$
as $\Phi^{n+1}(x):=S_{n,\tau}^{\text{Str}}(\Phi^n)$.

\subsection{Uniform error bounds}

For the numerical approximation $\Phi^n(x)$ obtained from $S_2$ \eqref{eq:S2}, we introduce the error function as in $S_1$
\be\label{eq:error2}
{\bf e}^n(x) = \Phi(t_n, x) - \Phi^n(x), \quad 0\leq n\leq \frac{T}{\tau},
\ee
and  the following uniform error bounds hold.

\begin{theorem}\label{thm:strang}
	Let $\Phi^n(x)$ be the
	numerical approximation obtained from $S_2$ \eqref{eq:S2}, then
	under the assumptions $(A)$ and $(B)$ with $m = 2$, there exists $0<\tau_0\leq1$ independent of $\eps$ such that
 the following error estimates hold for $0<\tau<\tau_0$,
	\be
	\|{\bf e}^n(x)\|_{H^1}\lesssim \tau^2+\eps,\quad \|{\bf e}^n(x)\|_{H^1}\lesssim \tau^2+\tau^2/\eps^3, \quad 0\le n\le\frac{T}{\tau}.
	\ee
	As a result, there is a uniform error bound for $S_2$ for $\tau>0$ small enough
	\be
	\|{\bf e}^n(x)\|_{H^1}\lesssim \tau^2 + \max_{0<\eps\leq 1}\min\{\eps, \tau^2/\eps^3\}\lesssim \sqrt{\tau},  \quad 0\leq n\leq\frac{T}{\tau}.
	\ee
\end{theorem}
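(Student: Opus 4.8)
The plan is to mirror the three-part architecture of the proof of Theorem~\ref{thm:lie}: a conditional stability estimate for the Strang propagator, a one-step local error representation that isolates the leading temporal oscillation, and a telescoping-plus-Gronwall argument closed by induction on the a priori bound $\|\Phi^n\|_{H^1}\le M_1+1$. First I would record the Strang analogue of Lemma~\ref{lem:lie:stab}: for $\|\Phi\|_{H^1},\|\Psi\|_{H^1}\le M$ one has $\|S_{n,\tau}^{\mathrm{Str}}(\Phi)-S_{n,\tau}^{\mathrm{Str}}(\Psi)\|_{H^1}\le e^{c\tau}\|\Phi-\Psi\|_{H^1}$, which follows exactly as in the Lie case because each half-step propagator $e^{-i\tau Q^\eps/2\eps^2}$ is an $H^1$ isometry and the nonlinear exponential is Lipschitz on $H^1$-balls (recall $H^1(\mathbb{R})$ is an algebra). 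This controls the nonlinearity of $\Phi^n$ once the inductive bound $\|\Phi^n\|_{H^1}\le M_1+1$ is in force.

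Next I would derive the second-order local error decomposition, the analogue of Lemma~\ref{lemma:lie}. Writing ${\bf e}^{n+1}=(\text{linearized Strang propagator}){\bf e}^n+\eta^n$ with $\eta^n=\Phi(t_{n+1})-S_{n,\tau}^{\mathrm{Str}}\Phi(t_n)$, I would insert Duhamel's formula \eqref{eq:Du2} into the symmetric splitting, Taylor-expand the middle nonlinear exponential, and split the evolution operator via \eqref{eq:dec:ev} to separate the phase factors $e^{\pm is/\eps^2}$. Because the scheme is symmetric, the smooth (non-oscillatory) remainder is one order higher in $\tau$ than in the Lie case, giving $\eta^n=\eta_1^n+(\text{propagators})\,\eta_2^n$ with $\|\eta_1^n\|_{H^1}\lesssim \tau^3+\tau\|{\bf e}^n\|_{H^1}$, while $\eta_2^n$ is a quadrature remainder of an oscillatory integrand $f_2^n(s)$ built from the same cross terms $\mathbf{g}_1,\mathbf{g}_2$ as in \eqref{eq:g1}--\eqref{eq:g2}. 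The crucial point is that the symmetric structure replaces the rectangle-rule remainder $\int_0^\tau f_2^n-\tau f_2^n(0)$ of the Lie case by a midpoint-type symmetric remainder, so that Taylor expansion yields $\|\eta_2^n\|_{H^1}\lesssim \tau^3\|\partial_{ss}f_2^n\|_{L^\infty([0,\tau];(H^1)^2)}$ rather than the $\tau^2\|\partial_s f_2^n\|$ of $S_1$.

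With this in hand the two competing bounds on $\eta_2^n$ follow as in \eqref{eq:S1_ineq}. Using the expansion $\Pi_\pm^\eps=\Pi_\pm^0\pm\eps\mathcal{R}_1$ from \eqref{eq:pi+}, the cross terms in $f_2^n$ are $O(\eps)$, so $\|\eta_2^n\|_{H^1}\lesssim\tau\eps$ directly; on the other hand each $s$-derivative of $f_2^n$ extracts a factor $1/\eps^2$ from the phases $e^{\pm 2is/\eps^2},e^{\pm 4is/\eps^2}$, whence $\|\partial_{ss}f_2^n\|_{H^1}\lesssim \eps/\eps^4=1/\eps^3$ and $\|\eta_2^n\|_{H^1}\lesssim\tau^3/\eps^3$. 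Combining, $\|\eta_2^n\|_{H^1}\lesssim\min\{\tau\eps,\tau^3/\eps^3\}$. I would then telescope the recursion exactly as in \eqref{eq:eta:lie}, using that $e^{-isQ^\eps/\eps^2}$ preserves the $H^1$ norm: the $\eta_1$ part accumulates over $n\le T/\tau$ steps to $(1/\tau)\cdot\tau^3=\tau^2$ plus a $\tau\sum_k\|{\bf e}^k\|_{H^1}$ term, and the $\eta_2$ part accumulates to $\min\{\eps,\tau^2/\eps^3\}$, reproducing the two stated estimates $\tau^2+\eps$ and $\tau^2+\tau^2/\eps^3$. Discrete Gronwall then gives $\|{\bf e}^n\|_{H^1}\lesssim\tau^2+\min\{\eps,\tau^2/\eps^3\}$; optimizing $\max_{0<\eps\le1}\min\{\eps,\tau^2/\eps^3\}$ at $\eps=\sqrt{\tau}$ (where $\eps=\tau^2/\eps^3$) yields the uniform $\sqrt{\tau}$ bound. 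Induction closes the argument, since the resulting $O(\sqrt{\tau})$ error keeps $\|\Phi^{n+1}\|_{H^1}\le M_1+C\sqrt{\tau}\le M_1+1$ for $\tau$ below a threshold $\tau_0$ independent of $\eps$.

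I expect the main obstacle to be the local-error decomposition itself, specifically verifying that the symmetric half-step structure genuinely produces a midpoint-type quadrature remainder in $\eta_2^n$ (hence the extra power of $\tau$ relative to $S_1$) rather than a rectangle-type one; this requires tracking the phase factors through both half-step propagators and the Taylor expansion of the middle nonlinear exponential carefully enough that the spurious leading $O(\tau^2/\eps^3)$ oscillatory contribution cancels. This cancellation is also what forces the higher regularity $m=2$ (i.e.\ $H^5$ data), since moving from $\partial_s f_2^n$ to $\partial_{ss}f_2^n$ introduces one further factor $1/\eps^2$, which must be absorbed by two extra spatial derivatives through the uniformly bounded operators $\mathcal{D}^\eps,\mathcal{R}_1,\mathcal{R}_2$, exactly the two-order jump from the $H^3$ requirement of Theorem~\ref{thm:lie}.
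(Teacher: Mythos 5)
Your overall architecture (a Strang stability lemma, an oscillatory local-error decomposition with the two-sided bounds $\|f_2^n\|_{H^1}\lesssim\eps$ versus $\|\partial_{ss}f_2^n\|_{H^1}\lesssim 1/\eps^3$, telescoping, discrete Gronwall, induction on $\|\Phi^n\|_{H^1}\le M_1+1$, and the optimization at $\eps=\sqrt{\tau}$) is exactly the paper's, and the dominant object in your analysis --- the midpoint-rule remainder $\int_0^\tau f_2^n(s)\,ds-\tau f_2^n(\tau/2)$ with the bound $\min\{\tau\eps,\tau^3/\eps^3\}$ --- is precisely the paper's $\zeta_2^n$. The gap is in your claim that everything else can be absorbed into $\eta_1^n$ with $\|\eta_1^n\|_{H^1}\lesssim\tau^3+\tau\|{\bf e}^n\|_{H^1}$. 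A single application of Duhamel's formula \eqref{eq:Du2} does not bring the smooth remainder down to $O(\tau^3)$: the paper iterates Duhamel once more, and this produces two additional $O(\tau^2)$-per-step contributions that your decomposition does not account for. First, replacing $\mathbf{F}(\Phi(t_n+s))$ by $\mathbf{F}(e^{-isQ^\eps/\eps^2}\Phi(t_n))$ inside the integrand costs $O(s)$ pointwise, hence $O(\tau^2)$ after integration (the term $h^n(s)$ in \eqref{eq:hn:S2}); second, matching the quadratic term in the expansion of the mid-step exponential $e^{-i\tau\mathbf{F}(\cdot)}$ against the iterated Duhamel double integral leaves the defect $\int_0^\tau\int_0^s g^n(s,w)\,dw\,ds-\frac{\tau^2}{2}g^n(\tau/2,\tau/2)$ with $g^n$ as in \eqref{eq:gn:S2}, whose oscillatory part is again only $O(\tau^2)$. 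If these are lumped into $\eta_1^n$, the telescoped sum degrades to $O(\tau)$ and the stated second-order estimates $\tau^2+\eps$ and $\tau^2+\tau^2/\eps^3$ fail (only the crude $\sqrt{\tau}$ bound would survive).

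The repair uses exactly your own two-sided technique, which is what the paper does: one must observe that in $\hat f^n(s)=ie^{-isQ^\eps/\eps^2}\int_0^s(f^n(w)-f^n(s))\,dw$ the $s$-independent (non-oscillatory) part of $f^n$ cancels in the difference, so the leading $O(\tau^2)$ piece of $\int_0^\tau h^n(s)\,ds$ is itself oscillatory with amplitude $O(\eps)$, giving $\|\zeta_1^n\|_{H^1}\lesssim\min\{\tau^2\eps,\tau^3/\eps\}$; similarly the oscillatory part $\mathcal{G}_1^n$ of $g^n$ yields $\|\zeta_3^n\|_{H^1}\lesssim\min\{\eps\tau^2,\tau^3/\eps\}$, while its constant part cancels exactly against the midpoint term. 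Summed over $n\le T/\tau$ steps these contribute $\min\{\tau\eps,\tau^2/\eps\}$, which is dominated by $\min\{\eps,\tau^2/\eps^3\}$, so your final estimates are unchanged once these terms are treated. A minor further point: the regularity $m=2$ is not forced by the $1/\eps^2$ factors from differentiating the phases (those are harmless constants in the $H^1$ bound) but by the expansion $e^{-isQ^\eps/\eps^2}=e^{-is/\eps^2}(I_2-is\mathcal{D}^\eps)\Pi_+^\eps+e^{is/\eps^2}(I_2+is\mathcal{D}^\eps)\Pi_-^\eps+O(s^2)$ needed to isolate the oscillatory part to accuracy $O(\tau^2)$, since $\mathcal{D}^\eps$ and the $O(s^2)$ remainder lose two and four spatial derivatives respectively.
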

\begin{proof}As the proof of the theorem is not difficult to establish by combining the techniques used in proving Theorem \ref{thm:lie} and the ideas in the proof of the uniform error bounds for $S_2$ in the linear case \cite{BCY}, we only give the outline of the proof here. For simplicity, we
assume $V(x)\equiv 0$ and denote $\Phi(t) := \Phi(t, x)$, $\Phi^n := \Phi^n(x)$ in short. Similar to the $S_1$ case, the $H^1$ bound of the numerical solution $\Phi^n$ is needed and can be done by using mathematical induction. For simplicity, we will assume the $H^1$ bound of $\Phi^n$   as in \eqref{eq:indu}.
	
\textbf{Step 1.} Use Taylor expansion and Duhamel's principle repeatedly to represent the `local truncation error' $\eta^n(x)=\Phi(t_{n+1})-e^{-\frac{i\tau}{2\eps^2}Q^\eps}e^{-i\tau\mathbf{F}(e^{-\frac{i\tau}{2\eps^2}Q^\eps}\Phi^n)}e^{-\frac{i\tau}{2\eps^2}Q^\eps}\Phi(t_n)$ \cite{BCJY,Lubich} as
\begin{equation*}
\eta^n(x)=	
e^{-\frac{i\tau}{\eps^2}Q^\eps}\left[\int_0^\tau (f^n(s)+h^n(s))\,ds-\tau f^n\left(\frac{\tau}{2}\right)
-\int_0^\tau\int_0^sg^n(s,w)\,dwds+\frac{\tau^2}{2}g^n\left(\frac{\tau}{2},\frac{\tau}{2}\right)\right]+R^n(x),\end{equation*}
where $\|R^n(x)\|_{H^1}\lesssim\tau^3+\tau\|\mathbf{e}^n(x)\|_{H^1}$, $f^n(s)$ is the same as that in Lie splitting $S_1$ case \eqref{eq:fns} and
\begin{align}
\label{eq:hn:S2}
&h^n(s)=-ie^{\frac{is}{\eps^2}Q^\eps}\left[\left(
\mathbf{F}\left(\Phi(t_{n}+s)\right)-\mathbf{F}\left(e^{-\frac{is}{\eps^2}Q^\eps}\Phi(t_n)\right)\right)e^{-\frac{is}{\eps^2}Q^\eps}\Phi(t_n)\right],\quad 0\leq s\leq\tau,\\
&g^n(s,w)=e^{\frac{is}{\eps^2}Q^\eps}\left(\mathbf{F}(e^{-\frac{is}{\eps^2}Q^\eps}\Phi(t_n))e^{-\frac{i(s-w)}{\eps^2}Q^\eps}\left(\mathbf{F}(e^{-\frac{is}{\eps^2}Q^\eps}\Phi(t_n))e^{-\frac{iw}{\eps^2}Q^\eps}\Phi(t_n)\right)\right),\;0\leq s,w\leq \tau.\label{eq:gn:S2}
\end{align}

\textbf{Step 2.}  For $h^n(s)$, using Duhamel's principle to get
\begin{align}
\Phi(t_n+s)=&e^{-\frac{is}{\eps^2}Q^\eps}\Phi(t_n)-ie^{-\frac{is}{\eps^2}Q^\eps}\int_0^sf^n(w)\,dw+O(s^2)\\
=&\phi^n(s)-is\mathbf{F}(\phi^n(s))\phi^n(s)-\hat{f}^n(s)+O(s^2),\nonumber
\end{align}
where $\phi^n(s)=e^{-\frac{is}{\eps^2}Q^\eps}\Phi(t_n)$, $\hat{f}^n(s)=ie^{-\frac{is}{\eps^2}Q^\eps}\int_0^s(f^n(w)-f^n(s))\,dw$, and we could find
\begin{align*}
\mathbf{F}\left(\Phi(t_{n}+s)\right)-\mathbf{F}\left(e^{-\frac{is}{\eps^2}Q^\eps}\Phi(t_n)\right)
=-2\lambda_1\text{Re}\left((\phi^n(s))^*\sigma_3\hat{f}^n(s)\right)\sigma_3-2\lambda_2\text{Re}\left((\phi^n(s))^*\hat{f}^n(s)\right)I_2+O(s^2).
\end{align*}
Recalling $\hat{f}^n(s)=O(s)$ and  \eqref{eq:fn11}, we get $f^n(s)-f^n(w)=f^{n}_2(s)-f_2^n(w)+O(s)$ with $f_2^n(s)$ given in \eqref{eq:f2nlie}. Finally, under the assumption of Theorem \ref{thm:strang}, expanding $e^{-\frac{isQ^\eps}{\eps^2}}\Phi(t_n)=e^{-\frac{is}{\eps^2}}\Phi_+^\eps(t_n)+e^{\frac{is}{\eps^2}}\Phi_-^\eps(t_n)+O(s)$,  we can write the $h^n(s)$ term as
\begin{align}
\int_0^\tau h^n(s)\,ds=\zeta_1^n(x)+\kappa_1^n(x), \quad \|\kappa_1^n(x)\|_{H^1}\lesssim \tau^3,
\end{align}
with $\zeta_1^n(x)$ given as
\[\zeta_1^n(x):= 2i\int_0^\tau  e^{\frac{is}{\eps^2}Q^\eps}\left[\left(
\lambda_1\text{Re}\left((\phi^n(s))^*\sigma_3\hat{f}^n(s)\right)\sigma_3
+\lambda_2\text{Re}\left((\phi^n(s))^*\hat{f}^n(s)\right)I_2\right)
\phi^n(s)\right]\,ds.\]
By taking $e^{-\frac{isQ^\eps}{\eps^2}}\approx e^{-\frac{is}{\eps^2}}\Pi_++e^{\frac{is}{\eps^2}}\Pi_-$,  it can be proved that $\|\zeta_1^n(x)\|_{H^1}\lesssim \min\{\tau^2\eps,\frac{\tau^3}{\eps}\}$.

Similarly,  $g^n(s, w)$ can be written as \be
 g^n(s, w) = \mathcal{G}_1^n(s, w) + \mathcal{G}_2^n(s,w)+\mathcal{G}_3^n(s,w) ,
\ee
where $\|\mathcal{G}_3^n(s,w)\|_{H^1}\lesssim\tau$,  the oscillatory term (in time) $\mathcal{G}_1^n(s, w) $ simplifies $g^n(s,w)$ by using $e^{-\frac{isQ^\eps}{\eps^2}}\approx e^{-\frac{is}{\eps^2}}\Pi_++e^{\frac{is}{\eps^2}}\Pi_-$ and removing the non-oscillatory terms as in \eqref{eq:tfn},  $\mathcal{G}_2^n(s,w)=\mathcal{G}_2^n(0,0)$ is the non-oscillatory term  ($s,w$ independent) similar to \eqref{eq:tfn}, $\|\mathcal{G}_1^n(s,w)\|_{H^1}\lesssim\eps$. We can prove $\|\partial_s\mathcal{G}_1^n(s,w)\|_{H^1}\lesssim1/\eps$, $\|\partial_w\mathcal{G}_1^n(s,w)\|_{H^1}\lesssim1/\eps$.

Lastly,  $f^n(s)$ can be decomposed as
 \be
 f^n(s) = \mathcal{F}_1^n(s) + \mathcal{F}_2^n(s)+ \mathcal{F}_3^n(s),
\ee
where $\|\mathcal{F}_3^n(s)\|_{H^1}\lesssim\tau^2$, the oscillatory term (in time) $\mathcal{F}_1^n(s)$ simplifies $f^n(s)$ by using $e^{-\frac{isQ^\eps}{\eps^2}}=e^{-\frac{is}{\eps^2}}(I_2-is\mathcal{D}^\eps)\Pi_++e^{\frac{is}{\eps^2}}(I_2+is\mathcal{D}^\eps)\Pi_-+O(s^2)$ and removing the non-oscillatory terms as in \eqref{eq:tfn},  $\mathcal{F}_2^n(s)=\mathcal{F}_2^n(0)$ is the non-oscillatory term ($s$ independent) similar to \eqref{eq:tfn}. We can prove  $\|\mathcal{F}_1^n(s)\|_{H^1}\lesssim\eps$, $\|\partial_s\mathcal{F}_1^n(s)\|_{H^1}\lesssim1/\eps$, $\|\partial_{ss}\mathcal{F}_1^n(s)\|_{H^1}\lesssim1/\eps^3$.

Denote
\begin{align}\label{eq:zeta}
&\zeta_{2}^n(x) = \left(\int_0^\tau \mathcal{F}_1^n(s)\,ds-\tau \mathcal{F}_1^n(\tau/2)\right),\quad \zeta_{3}^n(x) = \left(\int_0^\tau\int_0^s\mathcal{G}_1^n(s,w)\,dwds-\frac{\tau^2}{2}\mathcal{G}_1^n(\tau/2,\tau/2)\right),
\end{align}
and we have
\be
\eta^n(x)=e^{-\frac{i\tau}{\eps^2}Q^\eps}\left[\zeta_1^n(x)+\zeta_{2}^n(x)-\zeta_3^n(x)\right]
+\kappa^n(x),
\ee
where $\kappa^n(x)=R^n(x)+e^{-\frac{i\tau}{\eps^2}Q^\eps}\left(\kappa_1^n(x)
+\int_0^\tau \mathcal{F}_3^n(s)\,ds-\tau \mathcal{F}_3^n\left(\frac{\tau}{2}\right)
-\int_0^\tau\int_0^s\mathcal{G}_3^n(s,w)\,dwds+\frac{\tau^2}{2}\mathcal{G}_3^n\left(\frac{\tau}{2},\frac{\tau}{2}\right)\right)$ and $\|\kappa^n(x)\|_{H^1}\lesssim \tau^3+\tau\|\mathbf{e}^n(x)\|_{H^1}$.

Following the idea in $S_1$ case \eqref{eq:eta:lie}, we have the error equation for $S_2$
\be\label{eq:rec:2}
{\bf e}^{n+1}(x) = e^{-\frac{i\tau}{\eps^2}Q^\eps}{\bf e}^n(x) + \zeta_1^n(x) + \zeta_2^n(x) - \zeta_3^n(x)+\kappa^n(x)+\widetilde{L}_n(\mathbf{e}^n(x)), \quad 0\leq n\leq \frac{T}{\tau} - 1,
\ee
where $\widetilde{L}_n{\mathbf{e}^n}(x)=e^{-\frac{i\tau}{2\eps^2}Q^\eps}\left(e^{-i\tau\mathbf{F}\left(e^{-\frac{i\tau}{2\eps^2}Q^\eps}\Phi^n\right)}-I_2\right)e^{-\frac{i\tau}{2\eps^2}Q^\eps}$,  and $\|\widetilde{L}_n{\mathbf{e}^n(x)}\|_{H^1}\leq e^{c_{M_1}\tau}\|\mathbf{e}^n(x)\|_{H^1}$ ($c_{M_1}$ depends on $M_1$). For $0\leq n\leq\frac{T}{\tau}-1$, we would  have (following \eqref{eq:ebd:1}),
\be\label{eq:eta:str}
\|{\bf e}^{n+1}(x)\|_{H^1} \lesssim \tau^2+ \tau\sum_{k=0}^n\|{\bf e}^k(x)\|_{H^1} +\sum_{j=1,2,3} \left\|\sum_{k = 0}^n e^{-\frac{i(n-k+1)\tau}{\eps^2}Q^\eps}\zeta_j^k(x)\right\|_{H^1}.
\ee

Under the hypothesis of Theorem \ref{thm:strang}, we have
\begin{align*}
&\|\mathcal{F}_1^n(s)\|_{H^1}\lesssim \eps, \quad \|\partial_s\mathcal{F}_1^n(s)\|_{H^1} \lesssim \eps/\eps^2 = 1/\eps, \quad \|\partial_{ss}\mathcal{F}_1^n(s)\|_{H^1}\lesssim 1 / \eps^3, \quad 0\leq s\leq\tau;\\
&\|\mathcal{G}_1^n(s, w)\|_{H^1}\lesssim\eps, \quad \|\partial_s\mathcal{G}_1^n(s, w)\|_{H^1}\lesssim 1 / \eps, \quad \|\partial_w\mathcal{G}_1^n(s, w)\|_{H^1}\lesssim 1 / \eps, \quad 0\leq s, w\leq\tau,
\end{align*}
which together with \eqref{eq:zeta} gives $\|\zeta_2^n(x)\|_{H^1}\lesssim\min\{\eps\tau,\tau^3/\eps^3\}$ and $\|\zeta_3^n(x)\|_{H^1}\lesssim\min\{\eps\tau^2,\tau^3/\eps\}$.
Since $ \|\zeta_1^n(x)\|_{H^1}\lesssim \min\{\tau^2\eps,\frac{\tau^3}{\eps}\}$, we derive from \eqref{eq:eta:str} that
\begin{align}
\|{\bf e}^{n+1}(x)\|_{H^1} \lesssim &\tau^2+ \tau\sum_{k=0}^n\|{\bf e}^k(x)\|_{H^1} +n\min\{\eps\tau^2, \tau^3/\eps\}+ \left\|\sum_{k = 0}^n e^{-\frac{i(n-k+1)\tau}{\eps^2}Q^\eps}\zeta_2^k(x)\right\|_{H^1}\label{eq:etag}\\
\lesssim &\tau^2 +n\min\{\eps\tau, \tau^3/\eps^3\} + \tau\sum_{k=0}^n\|{\bf e}^k(x)\|_{H^1}, \quad 0\leq n\leq\frac{T}{\tau} - 1.\nonumber
\end{align}
The discrete Gronwall's inequality gives the desired results in Theorem \ref{thm:strang} with the help of mathematical induction.
\end{proof}

\vspace{10pt}
For non-resonant time steps, i.e., for $\tau\in\mathcal{A}_\delta(\eps)$, similar to $S_1$, we can derive improved uniform error bounds for $S_2$ as shown in the following theorem.
\begin{theorem}\label{thm:strang2}
	Let $\Phi^n(x)$ be the
	numerical approximation obtained from $S_2$ \eqref{eq:S2}.
	If the time step size $\tau$ is non-resonant, i.e. there exists $0 < \delta \leq 1$, such that $\tau\in \mathcal{A}_\delta(\eps)$, then under the assumptions $(A)$ and $(B)$ with $m = 2$,
	the following two error estimates hold for small enough $\tau>0$
	\be
	\|{\bf e}^n(x)\|_{H^1}\lesssim_\delta \tau^2+\tau\eps,\quad \|{\bf e}^n(x)\|_{H^1}\lesssim_\delta \tau^2+\tau^2/\eps, \quad 0\le n\le\frac{T}{\tau}.
	\ee
	As a result, there is an improved uniform error bound for $S_2$ when $\tau>0$ is small enough
	\be
	\|{\bf e}^n(x)\|_{H^1}\lesssim_\delta \tau^2 + \max_{0<\eps\leq 1}\min\{\tau\eps, \tau^2/\eps\}\lesssim_\delta \tau^{3/2}, \quad 0\leq n\leq\frac{T}{\tau}.
	\ee
\end{theorem}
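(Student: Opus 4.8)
The plan is to mirror the non-resonant argument for $S_1$ in Theorem~\ref{thm:lie2}, taking as starting point the error recursion \eqref{eq:eta:str} together with the decomposition of the local error into $\zeta_1^n,\zeta_2^n,\zeta_3^n$ produced in the proof of Theorem~\ref{thm:strang}. Since the hypotheses of Theorem~\ref{thm:strang} are in force, the numerical solution is already uniformly bounded in $H^1$ (the induction \eqref{eq:indu}) and the local-error representation is available, so no fresh induction on $\Phi^n$ is needed. The three pieces enter through $\|\sum_{k=0}^n e^{-i(n-k+1)\tau Q^\eps/\eps^2}\zeta_j^k\|_{H^1}$, and the whole point is that only $\zeta_2^n$ (the quadrature remnant \eqref{eq:zeta} of the oscillatory part $\mathcal{F}_1^n$ of $f^n$) needs the summation-by-parts cancellation, whereas $\zeta_1^n$ and $\zeta_3^n$ are already small enough to sum trivially.

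First I would dispose of $\zeta_1^n$ and $\zeta_3^n$. By Theorem~\ref{thm:strang} these satisfy $\|\zeta_1^n\|_{H^1},\|\zeta_3^n\|_{H^1}\lesssim\min\{\eps\tau^2,\tau^3/\eps\}$, so the triangle inequality and $n\tau\lesssim T$ give $\|\sum_k e^{-i(n-k+1)\tau Q^\eps/\eps^2}\zeta_j^k\|_{H^1}\lesssim n\min\{\eps\tau^2,\tau^3/\eps\}\lesssim\min\{\eps\tau,\tau^2/\eps\}$ for $j=1,3$, with no cancellation required. This is exactly the $\tau\eps$ / $\tau^2/\eps$ size the theorem asks for.

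The hard part will be the $\zeta_2^n$ term, which per step is only $O(\min\{\eps\tau,\tau^3/\eps^3\})$ and whose naive summation produces the $\tau^2/\eps^3$ bottleneck of Theorem~\ref{thm:strang}. Here I would repeat the machinery of \eqref{eq:S1_eta2}--\eqref{eq:sum2}: writing $\zeta_2^n=\int_0^\tau\mathcal{F}_1^n(s)\,ds-\tau\mathcal{F}_1^n(\tau/2)$ and expanding $\mathcal{F}_1^n(s)$ into the four oscillatory classes carrying phases $e^{\pm 2is/\eps^2}$ and $e^{\pm 4is/\eps^2}$ (the extra $s\mathcal{D}^\eps$ corrections coming from $e^{-isQ^\eps/\eps^2}=e^{-is/\eps^2}(I_2-is\mathcal{D}^\eps)\Pi_+^\eps+e^{is/\eps^2}(I_2+is\mathcal{D}^\eps)\Pi_-^\eps+O(s^2)$ carry an extra power of $\tau$ and are harmless), each class factors as a constant midpoint-quadrature coefficient times a smooth amplitude of the $\mathcal{R}_{j,\pm}(\widetilde{\Phi}_+^\eps(t_k),\widetilde{\Phi}_-^\eps(t_k))$ type. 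After passing to $\widetilde{\Phi}_\pm^\eps$ and using \eqref{eq:tildephi:bd}, summation by parts as in \eqref{eq:sm1} converts the geometric sums into $\theta_k,\widetilde{\theta}_k$, and the non-resonance $\tau\in\mathcal{A}_\delta(\eps)$ of \eqref{tau_range} gives $|\theta_k|,|\widetilde{\theta}_k|\lesssim 1/\delta$ exactly as before. Two ingredients are specific to obtaining the stated $\eps$-dependence, and verifying them is the main obstacle: (i) the amplitudes $\mathcal{R}_{j,\pm}$ \emph{and} their time derivatives are both $O(\eps)$ in $H^1$ --- this is the cross-projection cancellation $(\Phi_+^\eps)^*\sigma_3\Phi_-^\eps=O(\eps)$ already exploited for $f_2^n$, and it survives differentiation because $\partial_t\widetilde{\Phi}_\pm^\eps$ remains in the range of $\Pi_\pm^\eps$ by the evolution equation for $\widetilde{\Phi}_\pm^\eps$; and (ii) the midpoint coefficient obeys $|\int_0^\tau e^{i\alpha s/\eps^2}\,ds-\tau e^{i\alpha\tau/(2\eps^2)}|\lesssim\min\{\tau,\tau^3/\eps^4\}$, from $|\sin x-x|\le\min\{2|x|,|x|^3/6\}$. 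Combining (i) and (ii), the summation by parts yields $\|\sum_k e^{-i(n-k+1)\tau Q^\eps/\eps^2}\zeta_2^k\|_{H^1}\lesssim_\delta \eps\min\{\tau,\tau^3/\eps^4\}=\min\{\tau\eps,\tau^3/\eps^3\}$, which is no larger than $\min\{\tau\eps,\tau^2/\eps\}$. Note that ingredient (i) is genuinely essential: a mere $O(1)$ amplitude would leave $\zeta_2^n$ contributing $O(\tau)$ and destroy the $\tau\eps$ estimate.

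Finally I would assemble everything into \eqref{eq:eta:str}, obtaining $\|{\bf e}^{n+1}\|_{H^1}\lesssim_\delta\tau^2+\min\{\tau\eps,\tau^2/\eps\}+\tau\sum_{k=0}^n\|{\bf e}^k\|_{H^1}$, and close with the discrete Gronwall inequality to get the two bounds $\|{\bf e}^n\|_{H^1}\lesssim_\delta\tau^2+\tau\eps$ and $\|{\bf e}^n\|_{H^1}\lesssim_\delta\tau^2+\tau^2/\eps$; the balance $\max_{0<\eps\le1}\min\{\tau\eps,\tau^2/\eps\}=\tau^{3/2}$, attained near $\eps=\sqrt{\tau}$, then gives the uniform $\tau^{3/2}$ rate.
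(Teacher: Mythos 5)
Your proposal follows essentially the same route as the paper's (deliberately brief) proof: start from \eqref{eq:etag}, absorb $\zeta_1^n$ and $\zeta_3^n$ by trivial summation of their $\min\{\eps\tau^2,\tau^3/\eps\}$ bounds, and apply the summation-by-parts/non-resonance machinery of Theorem~\ref{thm:lie2} only to the $\zeta_2^n$ term before closing with discrete Gronwall. Your two explicit ingredients --- the $O(\eps)$ bound on the oscillatory amplitudes \emph{and} their time derivatives via the $\Pi_\pm^\eps$ cross-cancellation, and the midpoint-quadrature estimate $\lesssim\min\{\tau,\tau^3/\eps^4\}$ --- are exactly what the paper's phrase ``the computations are more or less the same'' is implicitly relying on, and your resulting bound $\min\{\tau\eps,\tau^3/\eps^3\}$ for the $\zeta_2$ sum implies the paper's $\min\{\tau\eps,\tau^2/\eps\}$.
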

\begin{proof}
	As the proof is extended from the techniques used for $S_1$ and the proof for improved uniform error bounds for $S_2$ in the linear case \cite{BCY}, here we just show the outline of the proof for brevity.
	
	We start from \eqref{eq:etag}. Following the strategy in the $S_1$ case, the key idea is to extract the leading terms from $\Phi(t,x)$ as \eqref{eq:tildephi} for estimating $\zeta_2^n(x)$, and the computations are more or less the same.	Recalling \eqref{eq:zeta} , noticing $\mathcal{F}_1^n(s)$ is similar to $f_2^n(s)$ \eqref{eq:f2nlie} and $\|\zeta_2^n(x)\|_{H^1}\lesssim\min\{\eps\tau,\tau^2/\eps\}$,   following the computations in the proof of Theorem \ref{thm:lie2}, we would get for $0\leq n\leq\frac{T}{\tau}-1$ and $\tau\in A_\delta(\eps)$,
	\be
	\left\|\sum_{k = 0}^n e^{-\frac{i(n-k+1)\tau}{\eps^2}Q^\eps}\zeta_2^k(x)\right\|_{H^1}
	\lesssim \sum_{k=0}^n\frac{1}{\delta}\tau \min\{\eps\tau,\tau^2/\eps\}
	\lesssim\frac{1}{\delta}\min\{\eps\tau,\tau^2/\eps\},
	\ee
	and the conclusions of Theorem \ref{thm:strang2} hold by applying the discrete Gronwall inequality
to \eqref{eq:etag}.
\end{proof}
\\


\subsection{Numerical results}
In this subsection, we use a numerical example to validate our uniform error bounds in Theorems \ref{thm:strang} and \ref{thm:strang2}.

In the example, we  choose the nonlinearity and the initial values as \eqref{eq:numer_F} and \eqref{eq:numer_ini}. In order to show that the error estimates still hold for $V\neq 0$,  here we take the electric potential
\begin{equation}
    V(x) = \frac{x-1}{x^2+1}.
\end{equation}

We first test the errors for resonant time steps, that is, for small enough chosen $\eps$, there is a positive $k_0$, such that $\tau = \frac{1}{2}k_0\eps^2\pi$, to check the error bounds in Theorem \ref{thm:strang}.
In this case, the bounded computational domain is taken as $\Omega = (-32, 32)$. 
The numerical `exact' solution is generated by $S_2$ with a very fine time step size $\tau_e = 2\pi\times10^{-6}$.

The discrete $H^1$ error $e^{\eps,\tau}(t_n)$ used to show the results is defined in \eqref{eq:discrete_H1}. It should be close to the $H^1$ errors in Theorems \ref{thm:strang} here. In addition, we test the performance of $S_2$ in approximating the physical observables including probability density, current density, and energy. The discrete $l^1$ error for probability density is defined as
\be\label{eq:err_density}
e_\rho^{\eps, \tau}(t_n) = \|\rho^n - \rho(t_n, \cdot)\|_{l^1} = h\sum_{j=0}^{M-1}\left|(\Phi_j^n)^\ast\Phi_j^n - \Phi(t_n, x_j)^\ast\Phi(t_n, x_j)\right|,
\ee
the discrete relative $l^1$ error for current density is given by
\begin{equation}\label{eq:err_current}
e_\mathbf{J}^{\eps, \tau}(t_n) = \frac{\|\mathbf{J}(\Phi^n) - \mathbf{J}(\Phi(t_n, \cdot))\|_{l^1}}{\|\mathbf{J}(\Phi(t_n, \cdot))\|_{l^1}},
\end{equation}
where $\mathbf{J}(\Phi^n) = (J_1(\Phi^n), J_2(\Phi^n))^T$, with
\begin{equation}
J_k(\Phi^n) = \frac{1}{\eps}(\Phi^n)^\ast\sigma_k\Phi^n, \quad k = 1, 2,
\end{equation}
and the relative error for energy is defined as
\begin{equation}\label{eq:err_energy}
e_E^{\eps, \tau}(t_n) = \frac{|E(\Phi^n) - E(\Phi(t_n, \cdot))|}{E(\Phi(t_n, \cdot))},
\end{equation}
where
\begin{equation*}
E(\Phi^n) = h\sum_{j=0}^{M-1}\left(-\frac{i}{\eps}(\Phi_j^n)^\ast\sigma_1(\Phi')_j^n + \frac{1}{\eps^2}(\Phi_j^n)^\ast\sigma_3\Phi_j^n + V(x_j)|\Phi_j^n|^2 + \frac{\lambda_1}{2}((\Phi_j^n)^\ast\sigma_3\Phi_j^n)^2 + \frac{\lambda_2}{2}|\Phi_j^n|^4\right).
\end{equation*}

Tables \ref{table:NLDirac_S2} to \ref{table:NLDirac_S2_energy} exhibit the corresponding numerical temporal errors $e^{\eps, \tau}(t = 2\pi)$, $e_\rho^{\eps, \tau}(t=2\pi)$, $e_\mathbf{J}^{\eps, \tau}(t=2\pi)$, and $e_E^{\eps, \tau}(t = 2\pi)$ for $S_2$ with different $\varepsilon$ and resonant time step size $\tau$.

\begin{table}[htp]
	\def\temptablewidth{1\textwidth}
	\vspace{-12pt}
	\caption{Discrete $H^1$ temporal errors $e^{\eps, \tau}(t = 2\pi)$ for the wave function of the  NLDE \eqref{eq:NLDirac_nonmag} with resonant time step size, $S_2$ method. }
	{\rule{\temptablewidth}{1pt}}
	
	\begin{tabular*}{\temptablewidth}{@{\extracolsep{\fill}}cccccccc}
		$e^{\eps, \tau}(t = 2\pi)$	 & $\tau_0 = \pi / 4$ & $\tau_0 / 4$ & $\tau_0 / 4^2$ & $\tau_0 / 4^3$
		& $\tau_0 / 4^4$ & $\tau_0 / 4^5$ & $\tau_0/4^6$ \\ \hline
		$\eps_0 = 1$ & 1.17E+1 & \textbf{2.55E-1} &	1.37E-2 & 8.49E-4 &	5.30E-5 & 3.31E-6 &	2.07E-7\\
		order & -- & \textbf{2.76} &	2.11 &	2.01 &	2.00 &	2.00 &	2.00\\\hline
		$\eps_0 / 2$ & 4.63 & 4.32E-1 &	\textbf{7.83E-3} &	4.84E-4 &	3.02E-5 &	1.89E-6 &	1.18E-7\\
		order & -- & 1.71 &	\textbf{2.89} &	2.01 &	2.00 &	2.00 &	2.00\\\hline
		$\eps_0 / 2^2$ & 4.36 &	1.50 &	1.04E-2 &	\textbf{6.00E-4} &	3.73E-5 &	2.33E-6 &	1.45E-7\\
		order & -- & 0.77 &	3.59 &	\textbf{2.05} &	2.00 &	2.00 &	2.00\\\hline
		$\eps_0 / 2^3$ & 3.61 &	8.39E-1 &	7.79E-1 & 1.02E-3 &	\textbf{5.98E-5} & 3.72E-6 &	2.32E-7\\
		order & -- & 1.05 &	0.05 &	4.79 &	\textbf{2.05} &	2.00 &	2.00\\\hline
		$\eps_0 / 2^4$ & 3.51 &	4.38E-1 & 4.14E-1 &	4.02E-1 & 1.19E-4 &	\textbf{6.95E-6} & 4.32E-7\\
		order & -- & 1.50 &	0.04 &	0.02 &	5.86 &	\textbf{2.05} &	2.00\\\hline
		$\eps_0 / 2^5$ &\textbf{3.50} &	2.44E-1 & 2.09E-1 &	2.08E-1 & 2.05E-1 &	1.47E-5 & \textbf{8.55E-7}\\
		order & \textbf{--} & 1.92 &	0.11 &	0.00 &	0.01 &	6.89 &	\textbf{2.05}\\\hline
		$\eps_0 / 2^9$ & 3.46 &	\textbf{1.10E-1} & 1.45E-2 &	1.31E-2 & 1.31E-2 &	1.31E-2 & 1.31E-2\\
		order & -- & \textbf{2.49} &	1.46 &	0.07 &	0.00 &	0.00 &	0.00\\\hline
		$\eps_0 / 2^{13}$ & 3.45 &	1.08E-1 & \textbf{4.76E-3} &	9.11E-4 & 8.21E-4 &	8.18E-4 & 8.18E-4\\
		order & -- & 2.50 &	\textbf{2.25} &	1.19 &	0.08 &	0.00 &	0.00\\\hline
		$\eps_0 / 2^{17}$ & 3.45 &	1.08E-1 &	4.57E-3 &	\textbf{3.18E-4} &	7.94E-5 &	7.57E-5 &	7.57E-5\\
		order & -- &  2.50 &	2.28 &	\textbf{1.92} &	1.00 &	0.03 &	0.00\\\hline\hline
		$\max\limits_{0<\eps\leq 1}e^{\eps, \tau}(t = 2\pi)$& 1.17E+1 &	1.50 &	7.79E-1 &	4.02E-1 &	2.05E-1 &	1.04E-1 & 5.21E-2\\
		order & -- & 1.48 &	0.47 &	0.48 &	0.49 &	0.49 &	0.50
	\end{tabular*}
	{\rule{\temptablewidth}{1pt}}
	\label{table:NLDirac_S2}
\end{table}

\begin{table}[htp]
	\def\temptablewidth{1\textwidth}
	\vspace{-12pt}
	\caption{Discrete $L^1$ temporal errors $e_\rho^{\eps, \tau}(t = 2\pi)$ for the probability density of the  NLDE \eqref{eq:NLDirac_nonmag} with resonant time step size, $S_2$ method. }
	{\rule{\temptablewidth}{1pt}}
	
	\begin{tabular*}{\temptablewidth}{@{\extracolsep{\fill}}cccccccc}
		$e_\rho^{\eps, \tau}(t = 2\pi)$	 & $\tau_0 = \pi / 4$ & $\tau_0 / 4$ & $\tau_0 / 4^2$ & $\tau_0 / 4^3$
		& $\tau_0 / 4^4$ & $\tau_0 / 4^5$ & $\tau_0/4^6$ \\ \hline
		$\eps_0 = 1$ & 1.79 &	\textbf{3.63E-2} &	2.04E-3 &	1.27E-4 &	7.94E-6 &	4.96E-7 &	3.11E-8\\
		order & -- & \textbf{2.81} &	2.08 &	2.00 &	2.00 &	2.00 &	2.00\\\hline
		$\eps_0 / 2$ & 1.09 &	4.94E-2 &	\textbf{1.56E-3} &	9.66E-5 &	6.03E-6 &	3.77E-7 &	2.37E-8\\
		order & -- & 2.23 &	\textbf{2.49} &	2.01 &	2.00 &	2.00 &	2.00\\\hline
		$\eps_0 / 2^2$ & 1.37 &	4.68E-1 &	2.86E-3 &	\textbf{1.61E-4} &	9.97E-6 &	6.23E-7 &	3.87E-8\\
		order & -- & 0.77 &	3.68 &	\textbf{2.08} &	2.00 &	2.00 &	2.01\\\hline
		$\eps_0 / 2^3$ & 1.06 &	3.87E-1 &	2.97E-1 &	3.05E-4 &	\textbf{1.74E-5} &	1.08E-6 &	6.72E-8\\
		order & -- & 0.73 &	0.19 &	4.96 &	\textbf{2.07} &	2.00 &	2.00\\\hline
		$\eps_0 / 2^4$ & 9.00E-1 &	2.05E-1 &	1.89E-1 &	1.70E-1 &	3.50E-5 &	\textbf{2.01E-6} &	1.25E-7\\
		order & -- & 1.07 &	0.06 &	0.08 &	6.12 &	\textbf{2.06} &	2.00\\\hline
		$\eps_0 / 2^5$ & \textbf{8.28E-1} &	1.13E-1 &	9.58E-2 &	9.49E-2 &	9.02E-2 &	4.20E-6 &	\textbf{2.43E-7}\\
		order & \textbf{--} & 1.44 &	0.12 &	0.01 &	0.04 &	7.20 &	\textbf{2.06}\\\hline
		$\eps_0 / 2^9$ & 7.66E-1 &	\textbf{3.01E-2} &	7.02E-3 &	6.02E-3 &	5.97E-3 &	5.96E-3 &	5.96E-3\\
		order & -- & \textbf{2.33} &	1.05 &	0.11 &	0.01 &	0.00 &	0.00\\\hline
		$\eps_0 / 2^{13}$ & 7.63E-1 &	2.67E-2 &	\textbf{1.84E-3} &	4.39E-4 &	3.76E-4 &	3.73E-4 &	3.73E-4\\
		order & -- & 2.42 &	\textbf{1.93} &	1.03 &	0.11 &	0.01 &	0.00\\\hline
		$\eps_0 / 2^{17}$ & 7.62E-1 &	2.65E-2 &	1.62E-3 &	\textbf{1.14E-4} &	2.60E-5 &	2.27E-5 &	2.27E-5\\
		order & -- &  2.42 &	2.02 &	\textbf{1.92} &	1.06 &	0.10 &	0.00\\\hline\hline
		$\max\limits_{0<\eps\leq 1}e_\rho^{\eps, \tau}(t = 2\pi)$& 1.79 &	4.68E-1 &	2.97E-1 &	1.70E-1 &	9.02E-2 &	4.64E-2 &	2.35E-2\\
		order & -- & 0.97 &	0.33 &	0.40 &	0.46 &	0.48 &	0.49
	\end{tabular*}
	{\rule{\temptablewidth}{1pt}}
	\label{table:NLDirac_S2_density}
\end{table}

\begin{table}[htp]
	\def\temptablewidth{1\textwidth}
	\vspace{-12pt}
	\caption{Discrete relative $L^1$ temporal errors $e_\mathbf{J}^{\eps, \tau}(t = 2\pi)$ for the current density of the  NLDE \eqref{eq:NLDirac_nonmag} with resonant time step size, $S_2$ method. }
	{\rule{\temptablewidth}{1pt}}
	
	\begin{tabular*}{\temptablewidth}{@{\extracolsep{\fill}}cccccccc}
	$e_\mathbf{J}^{\eps, \tau}(t = 2\pi)$	 & $\tau_0 = \pi / 4$ & $\tau_0 / 4$ & $\tau_0 / 4^2$ & $\tau_0 / 4^3$
	& $\tau_0 / 4^4$ & $\tau_0 / 4^5$ & $\tau_0/4^6$ \\ \hline
	$\eps_0 = 1$ & 7.11E-1 &	\textbf{1.47E-2} &	8.30E-4 &	5.16E-5 &	3.22E-6 &	2.02E-7 &	1.26E-8\\
	order & -- & \textbf{2.80} &	2.07 &	2.00 &	2.00 &	2.00 &	2.00\\\hline
	$\eps_0 / 2$ & 5.93E-1 &	2.55E-2 &	\textbf{8.37E-4} &	5.18E-5 &	3.23E-6 &	2.02E-7 &	1.27E-8\\
	order & -- & 2.27 &	\textbf{2.46} &	2.01 &	2.00 &	2.00 &	2.00\\\hline
	$\eps_0 / 2^2$ & 5.71E-1 &	3.34E-1 &	1.74E-3 &	\textbf{9.99E-5} &	6.22E-6 &	3.88E-7 &	2.41E-8\\
	order & -- & 0.39 &	3.79 &	\textbf{2.06} &	2.00 &	2.00 &	2.00\\\hline
	$\eps_0 / 2^3$ & 4.14E-1 &	2.19E-1 &	2.06E-1 &	1.98E-4 &	\textbf{1.15E-5} &	7.18E-7 &	4.47E-8\\
	order & -- & 0.46 &	0.05 &	5.01 &	\textbf{2.05} &	2.00 &	2.00\\\hline
	$\eps_0 / 2^4$ & 3.58E-1 &	1.17E-1 &	1.16E-1 &	1.13E-1 &	2.36E-5 &	\textbf{1.38E-6} &	8.56E-8\\
	order & -- & 0.81 &	0.01 &	0.02 &	6.11 &	\textbf{2.05} &	2.00\\\hline
	$\eps_0 / 2^5$ &\textbf{3.46E-1} &	6.07E-2 &	5.95E-2 &	5.95E-2 &	5.88E-2 &	2.90E-6 &	\textbf{1.69E-7}\\
	order & \textbf{--} & 1.26 &	0.01 &	0.00 &	0.01 &	7.16 &	\textbf{2.05}\\\hline
	$\eps_0 / 2^9$ & 3.42E-1 &	\textbf{1.28E-2} &	3.85E-3 &	3.81E-3 &	3.81E-3 &	3.81E-3 &	3.81E-3\\
	order & -- & \textbf{2.37} &	0.86 &	0.01 &	0.00 &	0.00 &	0.00\\\hline
	$\eps_0 / 2^{13}$ & 3.42E-1 &	1.24E-2 &	\textbf{7.76E-4} &	2.41E-4 &	2.38E-4 &	2.38E-4 &	2.38E-4\\
	order & -- & 2.39 &	\textbf{2.00} &	0.84 &	0.01 &	0.00 &	0.00\\\hline
	$\eps_0 / 2^{17}$ & 3.42E-1 &	1.24E-2 &	7.51E-4 &	\textbf{4.68E-5} &	1.35E-5 &	1.37E-5 &	1.37E-5\\
	order & -- &  2.39 &	2.02 &	\textbf{2.00} &	0.90 &	-0.01 &	0.00\\\hline\hline
	$\max\limits_{0<\eps\leq 1}e_\mathbf{J}^{\eps, \tau}(t = 2\pi)$& 7.11E-1 &	3.34E-1	 & 2.06E-1 &	1.13E-1 &	5.88E-2 &	3.00E-2 &	1.51E-2\\
	order & -- & 0.55 &	0.35 &	0.43 &	0.47 &	0.49 &	0.49
\end{tabular*}
{\rule{\temptablewidth}{1pt}}
\label{table:NLDirac_S2_current}
\end{table}

\begin{table}[htp]
	\def\temptablewidth{1\textwidth}
	\vspace{-12pt}
	\caption{Relative temporal errors $e_E^{\eps, \tau}(t = 2\pi)$ for the energy of the  NLDE \eqref{eq:NLDirac_nonmag} with resonant time step size, $S_2$ method. }
	{\rule{\temptablewidth}{1pt}}
	
	\begin{tabular*}{\temptablewidth}{@{\extracolsep{\fill}}cccccccc}
		$e_E^{\eps, \tau}(t = 2\pi)$	 & $\tau_0 = \pi / 4$ & $\tau_0 / 4$ & $\tau_0 / 4^2$ & $\tau_0 / 4^3$
		& $\tau_0 / 4^4$ & $\tau_0 / 4^5$ & $\tau_0/4^6$ \\ \hline
		$\eps_0 = 1$ & 1.30E-1 &	\textbf{1.94E-3} &	1.10E-4 &	6.86E-6 &	4.29E-7 &	2.69E-8 &	1.78E-9\\
		order & -- & \textbf{3.03} &	2.07 &	2.00 &	2.00 &	2.00 &	1.96\\\hline
		$\eps_0 / 2$ & 3.29E-2 &	2.16E-3 &	\textbf{7.02E-5} &	4.27E-6 &	2.67E-7 &	1.67E-8 &	1.12E-9 \\
		order & -- & 1.96 &	\textbf{2.47} &	2.02 &	2.00 &	2.00 &	1.95\\\hline
		$\eps_0 / 2^2$ & 2.01E-2 &	2.53E-2 &	2.54E-4 &	\textbf{1.36E-5} &	8.44E-7 &	5.25E-8 &	3.10E-9\\
		order & -- & -0.17 &	3.32 &	\textbf{2.11} &	2.01 &	2.00 &	2.04\\\hline
		$\eps_0 / 2^3$ & 3.20E-2 &	1.50E-3 &	9.21E-3 &	3.37E-5 &	\textbf{1.90E-6} &	1.18E-7 &	7.07E-9\\
		order & -- & 2.21 &	-1.31 &	4.05 &	\textbf{2.08} &	2.01 &	2.03\\\hline
		$\eps_0 / 2^4$ & 4.05E-2 &	4.25E-4 &	1.65E-3 &	3.32E-3 &	4.29E-6 &	\textbf{2.45E-7} &	1.51E-8\\
		order & -- & 3.29 &	-0.98 &	-0.50 &	4.80 &	\textbf{2.06} &	2.01\\\hline
		$\eps_0 / 2^5$ &\textbf{4.45E-2} &	1.50E-3 &	7.52E-4 &	8.92E-4 &	1.29E-3 &	5.35E-7 &	\textbf{3.09E-8}\\
		order & \textbf{--} & 2.45 &	0.50 &	-0.12 &	-0.27 &	5.62 &	\textbf{2.06}\\\hline
		$\eps_0 / 2^9$ & 4.65E-2 &	\textbf{2.51E-3} &	1.05E-4 &	4.42E-5 &	5.35E-5 &	5.41E-5 &	5.42E-5\\
		order & -- & \textbf{2.11} &	2.29 &	0.62 &	-0.14 &	-0.01 &	0.00\\\hline
		$\eps_0 / 2^{13}$ & 4.66E-2 &	2.57E-3 &	\textbf{1.55E-4} &	6.54E-6 &	2.75E-6 &	3.33E-6 &	3.36E-6\\
		order & -- & 2.09 &	\textbf{2.02} &	2.28 &	0.63 &	-0.14 &	-0.01\\\hline
		$\eps_0 / 2^{17}$ & 4.66E-2 &	2.57E-3 &	1.59E-4 &	\textbf{1.03E-5} &	1.03E-6 &	4.49E-7 &	4.49E-7\\
		order & -- &  2.09 &	2.01 &	\textbf{1.97} &	1.66 &	0.60 &	0.00\\\hline\hline
		$\max\limits_{0<\eps\leq 1}e_E^{\eps, \tau}(t = 2\pi)$& 1.30E-1 &	2.53E-2 &	9.21E-3 &	3.32E-3 &	1.29E-3 &	5.44E-4 &	2.45E-4\\
		order & -- & 1.18 &	0.73 &	0.74 &	0.68 &	0.62 &	0.58
	\end{tabular*}
	{\rule{\temptablewidth}{1pt}}
	\label{table:NLDirac_S2_energy}
\end{table}

In these tables, the last two rows show the largest error of each column for fixed $\tau$. We could observe similar patterns for the errors of the wave function, and the physical observables.
Clearly, overall there is $1/2$ order convergence, which agrees well with Theorem \ref{thm:strang} for the wave function, and also suggests the same convergence rate for the observables.
More specifically, from Tables \ref{table:NLDirac_S2} to \ref{table:NLDirac_S2_energy}, we can see when $\tau\gtrsim \sqrt{\eps}$ (below the lower bolded diagonal line), there is second order convergence, which coincides with the error bound  $\tau^2 +\eps$; when $\tau\lesssim\eps^2$ (above the upper bolded diagonal line), we  also observe second order convergence, which matches the other error bound $\tau^2 +\tau^2/ \eps^3$.\\

Furthermore, to support the improved uniform error bound in Theorems \ref{thm:strang2}, we test the error bounds using non-resonant time step sizes, i.e., we choose $\tau\in\mathcal{A}_\delta(\eps)$ for some given $\eps$ and fixed $0<\delta \leq1$. Similar to the resonant time step case, we also test the errors for physical observables.
The bounded computational domain is set as $\Omega = (-16, 16)$.

For comparison, the numerical `exact' solution is computed by $S_2$ with a very small time step size $\tau_e = 8\times10^{-6}$. Spatial mesh size is fixed as $h=1/16$ for all the numerical simulations.

Tables \ref{table:NLDirac_S2_nrs} to \ref{table:NLDirac_S2_nrs_energy} show the numerical temporal errors $e^{\eps, \tau}(t = 4)$, $e_\rho^{\eps, \tau}(t=4)$, $e_\mathbf{J}^{\eps, \tau}(t=4)$, and $e_E^{\eps, \tau}(t = 4)$ with different $\varepsilon$ and non-resonant time step size $\tau$ for $S_2$.

\begin{table}[htp]
	\def\temptablewidth{1\textwidth}
	\vspace{-12pt}
	\caption{Discrete $H^1$ temporal errors $e^{\eps, \tau}(t = 4)$ for the wave function with non-resonant time step size, $S_2$ method. }
	{\rule{\temptablewidth}{1pt}}
	\begin{tabular*}{\temptablewidth}{@{\extracolsep{\fill}}ccccccc}
		$e^{\eps, \tau}(t = 4)$& $\tau_0 = 1 / 4$ & $\tau_0 / 4$ & $\tau_0 / 4^2$ & $\tau_0 / 4^3$ & $\tau_0 / 4^4$ & $\tau_0 / 4^5$\\ \hline
		$\eps_0 = 1$ & 3.34E-1 &	\textbf{1.74E-2} &	1.08E-3 &	6.74E-5 &	4.21E-6 &	2.63E-7\\
		order & -- & \textbf{2.13} &	2.01 &	2.00 &	2.00 &	2.00\\\hline
		$\eps_0 / 2$ &1.53 &	9.43E-3 &	\textbf{5.83E-4} &	3.64E-5 &	2.27E-6 &	1.42E-7\\
		order & -- & 3.67 &	\textbf{2.01} &	2.00 &	2.00 &	2.00\\\hline
		$\eps_0 / 2^2$ & 6.44E-1 &	1.70E-2 &	8.76E-4 &	\textbf{5.44E-5} &	3.40E-6 &	2.12E-7\\
		order & -- & 2.62 &	2.14 &	\textbf{2.00} &	2.00 &	2.00\\\hline
		$\eps_0 / 2^3$ & 5.41E-1 &	5.31E-2 &	1.83E-3 &	9.64E-5 &	\textbf{5.98E-6} &	3.73E-7\\
		order & -- & 1.67 &	2.43 &	2.12 &	\textbf{2.01} &	2.00\\\hline
		$\eps_0 / 2^4$ & 2.67E-1 &	1.09E-1 &	6.97E-3 &	2.06E-4 &	1.15E-5 &	\textbf{7.15E-7}\\
		order & -- & 0.65 &	1.98 &	2.54 &	2.08 &	\textbf{2.01}\\\hline
		$\eps_0 / 2^6$ &\textbf{2.55E-1} &	9.94E-3 &	2.12E-3 &	9.51E-4 &	1.09E-4 &	3.21E-6\\
		order & \textbf{--} & 2.34 &	1.11 &	0.58 &	1.56 &	2.54\\\hline
		$\eps_0 / 2^8$ & 2.11E-1 &	\textbf{1.05E-2} &	6.37E-3 &	1.01E-4 &	1.83E-5 &	1.52E-5\\
		order & -- & \textbf{2.17} &	0.36 &	2.99 &	1.23 &	0.13\\\hline
		$\eps_0 / 2^{10}$ & 2.09E-1 &	8.41E-3 &	\textbf{2.09E-3} &	5.13E-5	 & 2.95E-5 &	1.14E-6\\
		order & -- & 2.32 &	\textbf{1.00} &	2.67 &	0.40 &	2.35\\\hline
		$\eps_0 / 2^{12}$ & 2.10E-1 &	8.43E-3 &	2.16E-3 &	\textbf{3.87E-5} &	2.86E-6 &	4.81E-7\\
		order & -- & 2.32 &	0.98 &	\textbf{2.90} &	1.88 &	1.29\\\hline
		$\eps_0 / 2^{14}$ & 2.10E-1 &	8.42E-3 &	2.14E-3 &	3.84E-5 &	\textbf{3.71E-6} &	4.82E-7\\
		order & -- & 2.32 &	0.99 &	2.90 &	\textbf{1.69} &	1.47\\\hline\hline
		$\max\limits_{0<\eps\leq 1}e^{\eps, \tau}(t = 4)$ & 1.53 &	1.09E-1 &	7.36E-3 &	9.51E-4 &	1.21E-4 &	1.52E-5\\
		order & -- & 1.91 &	1.94 &	1.48 &	1.49 &	1.49
	\end{tabular*}
	{\rule{\temptablewidth}{1pt}}
	\label{table:NLDirac_S2_nrs}
\end{table}

\begin{table}[htp]
	\def\temptablewidth{1\textwidth}
	\vspace{-12pt}
	\caption{Discrete $L^1$ temporal errors $e_\rho^{\eps, \tau}(t = 4)$ for the probability density with non-resonant time step size, $S_2$ method. }
	{\rule{\temptablewidth}{1pt}}
	\begin{tabular*}{\temptablewidth}{@{\extracolsep{\fill}}ccccccc}
		$e_\rho^{\eps, \tau}(t = 4)$& $\tau_0 = 1 / 4$ & $\tau_0 / 4$ & $\tau_0 / 4^2$ & $\tau_0 / 4^3$ & $\tau_0 / 4^4$ & $\tau_0 / 4^5$\\ \hline
		$\eps_0 = 1$ & 3.59E-2 &	\textbf{1.98E-3} &	1.23E-4 &	7.69E-6 &	4.81E-7 &	3.01E-8 \\
		order & -- & \textbf{2.09} &	2.00 &	2.00 &	2.00 &	2.00\\\hline
		$\eps_0 / 2$ &1.79E-1 &	1.94E-3 &	\textbf{1.19E-4} &	7.45E-6 &	4.66E-7 &	2.90E-8\\
		order & -- & 3.27 &	\textbf{2.01} &	2.00 &	2.00 &	2.00\\\hline
		$\eps_0 / 2^2$ & 1.18E-1 &	3.90E-3 &	2.14E-4 &	\textbf{1.32E-5} &	8.27E-7 &	5.16E-8\\
		order & -- & 2.46 &	2.09 &	\textbf{2.01} &	2.00 &	2.00\\\hline
		$\eps_0 / 2^3$ & 1.85E-1 &	1.42E-2 &	4.46E-4 &	2.38E-5 &	\textbf{1.47E-6}  &	9.20E-8\\
		order & -- & 1.85 &	2.50 &	2.11 &	\textbf{2.01} &	2.00\\\hline
		$\eps_0 / 2^4$ & 3.35E-2 &	1.34E-2 &	1.30E-3 &	4.24E-5 &	2.33E-6 &	\textbf{1.45E-7}\\
		order & -- & 0.66 &	1.68 &	2.47 &	2.09 &	\textbf{2.01}\\\hline
		$\eps_0 / 2^6$ & \textbf{4.75E-2} &	2.45E-3 &	2.27E-4 &	2.11E-4 &	2.31E-5 &	7.01E-7\\
		order & \textbf{--} & 2.14 &	1.72 &	0.05 &	1.60 &	2.52\\\hline
		$\eps_0 / 2^8$ & 3.36E-2 &	\textbf{2.01E-3} &	4.43E-4 &	1.59E-5 &	3.36E-6 &	2.87E-6\\
		order & -- & \textbf{2.03} &	1.09 &	2.40 &	1.12 &	0.11\\\hline
		$\eps_0 / 2^{10}$ & 3.30E-2 &	1.93E-3 &	\textbf{1.35E-4} &	1.17E-5 &	6.61E-6 &	2.77E-7\\
		order & -- & 2.05 &	\textbf{1.92} &	1.76 &	0.41 &	2.29\\\hline
		$\eps_0 / 2^{12}$ & 3.35E-2 &	1.95E-3 &	1.37E-4 &	\textbf{7.50E-6} &	6.66E-7 &	9.86E-8\\
		order & -- & 2.05 &	1.92 &	\textbf{2.09} &	1.75 &	1.38\\\hline
		$\eps_0 / 2^{14}$ & 3.35E-2 &	1.96E-3 &	1.24E-4 &	7.52E-6 &	\textbf{8.93E-7} &	1.61E-7\\
		order & -- & 2.05 &	1.99 &	2.02 &	\textbf{1.54} &	1.24\\\hline\hline
		$\max\limits_{0<\eps\leq 1}e_\rho^{\eps, \tau}(t = 4)$ & 1.85E-1 &	1.43E-2 &	1.57E-3 &	2.11E-4 &	2.37E-5 &	2.87E-6\\
		order & -- & 1.85 &	1.59 &	1.45 &	1.58 &	1.52
	\end{tabular*}
	{\rule{\temptablewidth}{1pt}}
	\label{table:NLDirac_S2_nrs_density}
\end{table}

\begin{table}[htp]
	\def\temptablewidth{1\textwidth}
	\vspace{-12pt}
	\caption{Discrete relative $L^1$ temporal errors $e_\mathbf{J}^{\eps, \tau}(t = 4)$ for the current density with non-resonant time step size, $S_2$ method. }
	{\rule{\temptablewidth}{1pt}}
	\begin{tabular*}{\temptablewidth}{@{\extracolsep{\fill}}ccccccc}
		$e_\mathbf{J}^{\eps, \tau}(t = 4)$ & $\tau_0 = 1 / 4$ & $\tau_0 / 4$ & $\tau_0 / 4^2$ & $\tau_0 / 4^3$ & $\tau_0 / 4^4$ & $\tau_0 / 4^5$\\ \hline
		$\eps_0 = 1$ & 2.07E-2 &	\textbf{1.13E-3} &	7.05E-5 &	4.40E-6 &	2.75E-7 &	1.72E-8\\
		order & -- & \textbf{2.10} &	2.00 &	2.00 &	2.00 &	2.00\\\hline
		$\eps_0 / 2$ & 8.37E-2 &	1.07E-3 &	\textbf{6.60E-5} &	4.12E-6 &	2.58E-7 &	1.61E-8\\
		order & -- & 3.15 &	\textbf{2.01} &	2.00 &	2.00 &	2.00\\\hline
		$\eps_0 / 2^2$ & 7.70E-2 &	2.52E-3 &	1.36E-4 &	\textbf{8.46E-6} &	5.28E-7 &	3.30E-8\\
		order & -- & 2.47 &	2.10 &	\textbf{2.00} &	2.00 &	2.00\\\hline
		$\eps_0 / 2^3$ & 1.06E-1 &	9.81E-3 &	2.87E-4 &	1.60E-5 &	\textbf{9.96E-7} &	6.21E-8\\
		order & -- & 1.71 &	2.55 &	2.08 &	\textbf{2.00} &	2.00\\\hline
		$\eps_0 / 2^4$ & 1.73E-2 &	9.97E-3 &	1.20E-3 &	3.41E-5 &	1.92E-6 &	\textbf{1.19E-7}\\
		order & -- & 0.40 &	1.53 &	2.57 &	2.08 &	\textbf{2.01}\\\hline
		$\eps_0 / 2^6$ & \textbf{4.76E-2} &	1.63E-3 &	1.89E-4 &	1.71E-4 &	1.91E-5 &	5.62E-7\\
		order & \textbf{--} & 2.43 &	1.56 &	0.07 &	1.58 &	2.54\\\hline
		$\eps_0 / 2^8$ & 1.97E-2 &	\textbf{1.28E-3} &	3.92E-4 &	1.38E-5 &	3.04E-6 &	2.59E-6\\
		order & -- & \textbf{1.97} &	0.85 &	2.42 &	1.09 &	0.12\\\hline
		$\eps_0 / 2^{10}$ & 2.02E-2 &	1.10E-3 &	\textbf{8.13E-5} &	7.89E-6 &	4.86E-6 &	2.02E-7\\
		order & -- & 2.10 &	\textbf{1.88} &	1.68 &	0.35 &	2.30\\\hline
		$\eps_0 / 2^{12}$ & 1.91E-2 &	1.11E-3 &	8.95E-5 &	\textbf{4.05E-6} &	4.88E-7 &	8.15E-8\\
		order & -- & 2.05 &	1.81 &	\textbf{2.23} &	1.53 &	1.29\\\hline
		$\eps_0 / 2^{14}$ & 1.91E-2 &	1.12E-3 &	7.03E-5 &	4.18E-6 &	\textbf{6.63E-7} &	1.31E-7\\
		order & -- & 2.05 &	2.00 &	2.04 &	\textbf{1.33} &	1.17\\\hline\hline
		$\max\limits_{0<\eps\leq 1}e_\mathbf{J}^{\eps, \tau}(t = 4)$ & 1.06E-1 &	9.97E-3 &	1.27E-3 &	1.71E-4 &	2.01E-5 &	2.59E-6\\
		order & -- & 1.70 &	1.49 &	1.45 &	1.54 &	1.48
	\end{tabular*}
	{\rule{\temptablewidth}{1pt}}
	\label{table:NLDirac_S2_nrs_current}
\end{table}

\begin{table}[htp]
	\def\temptablewidth{1\textwidth}
	\vspace{-12pt}
	\caption{Relative temporal errors $e_E^{\eps, \tau}(t = 4)$ for the energy with non-resonant time step size, $S_2$ method. }
	{\rule{\temptablewidth}{1pt}}
	\begin{tabular*}{\temptablewidth}{@{\extracolsep{\fill}}ccccccc}
		$e_E^{\eps, \tau}(t = 4)$& $\tau_0 = 1 / 4$ & $\tau_0 / 4$ & $\tau_0 / 4^2$ & $\tau_0 / 4^3$ & $\tau_0 / 4^4$ & $\tau_0 / 4^5$\\ \hline
		$\eps_0 = 1$ & 5.28E-4 & \textbf{1.29E-5} &	7.85E-7 &	4.89E-8 &	3.00E-9 &	1.33E-10\\
		order & -- & \textbf{2.67} &	2.02 &	2.00 &	2.01 &	2.25\\\hline
		$\eps_0 / 2$ & 1.75E-2 &	1.43E-4 &	\textbf{8.64E-6} &	5.39E-7 &	3.36E-8 &	2.04E-9\\
		order & -- & 3.47 &	\textbf{2.02} &	2.00 &	2.00 &	2.02\\\hline
		$\eps_0 / 2^2$ & 1.57E-2 &	3.73E-4 &	1.84E-5 &	\textbf{1.14E-6} &	7.10E-8 &	4.40E-9\\
		order & -- & 2.70 &	2.17 &	\textbf{2.01} &	2.00 &	2.01\\\hline
		$\eps_0 / 2^3$ & 3.41E-2 &	2.14E-3 &	5.49E-5 &	2.97E-6 &	\textbf{1.84E-7} &	1.13E-8\\
		order & -- & 2.00 &	2.64 &	2.10 &	\textbf{2.01} &	2.01\\\hline
		$\eps_0 / 2^4$ & 2.57E-3 &	3.03E-3 &	2.91E-4 &	8.13E-6 &	4.48E-7 &	\textbf{2.78E-8}\\
		order & -- & -0.12 &	1.69 &	2.58 &	2.09 &	\textbf{2.01}\\\hline
		$\eps_0 / 2^6$ & \textbf{1.22E-2} &	2.98E-4 &	3.86E-5 &	3.68E-5 &	4.05E-6 &	1.19E-7\\
		order & \textbf{--} & 2.68 &	1.47 &	0.03 &	1.59 &	2.55\\\hline
		$\eps_0 / 2^8$ & 1.74E-3 &	\textbf{1.79E-4} &	8.27E-5 &	3.16E-6 &	7.20E-7 &	6.16E-7\\
		order & -- & \textbf{1.64} &	0.56 &	2.35 &	1.07 &	0.11\\\hline
		$\eps_0 / 2^{10}$ & 1.98E-3 &	7.99E-5 &	\textbf{1.11E-5} &	1.53E-6 &	1.10E-6 &	4.60E-8\\
		order & -- & 2.31 &	\textbf{1.42} &	1.43 &	0.24 &	2.29\\\hline
		$\eps_0 / 2^{12}$ & 1.35E-3 &	8.45E-5 &	1.64E-5 &	\textbf{1.09E-7} &	1.11E-7 &	2.24E-8\\
		order & -- & 2.00 &	1.18 &	\textbf{3.62} &	-0.01 &	1.15\\\hline
		$\eps_0 / 2^{14}$ & 1.37E-3 &	9.61E-5 &	5.86E-6 &	2.34E-7 &	\textbf{1.81E-7} &	8.06E-9\\
		order & -- & 1.92 &	2.02 &	2.32 &	\textbf{0.19} &	2.24\\\hline\hline
		$\max\limits_{0<\eps\leq 1}e_E^{\eps, \tau}(t = 4)$ & 3.41E-2 &	3.03E-3 &	3.69E-4 &	3.95E-5 &	5.66E-6 &	6.28E-7\\
		order & -- & 1.75 &	1.52 &	1.61 &	1.40 &	1.59
	\end{tabular*}
	{\rule{\temptablewidth}{1pt}}
	\label{table:NLDirac_S2_nrs_energy}
\end{table}

The last two rows in Table \ref{table:NLDirac_S2_nrs} to \ref{table:NLDirac_S2_nrs_energy} show the largest error of each column for fixed $\tau$, which gives $3/2$ order of uniform convergence, and it is consistent with Theorem \ref{thm:strang2} for the wave function. We could conclude that for physical observables, the convergence rate is the same.
More specifically, in these tables, we can roughly observe the second order convergence when $\tau\gtrsim\eps$ (below the lower bolded diagonal line) or when $\tau\lesssim\eps^2$ (above the upper bolded diagonal line), agreeing with the error bound $\tau^2+\tau\eps$ and the other error bound $\tau^2+\tau^2/\eps$, respectively. When $\tau$ is large, the performance of the algorithm for probability density and current density is better than the performance for wave function and energy.

Through the results of this example, we successfully validate the uniform error bounds of $S_2$ in Theorems \ref{thm:strang} and \ref{thm:strang2}.

\begin{rmk}
	Through extensive numerical results not shown here for brevity, we found out that the super-resolution property also holds true for higher order time-splitting methods in solving the NLDE. Specifically, the fourth-order compact splitting method for the Dirac equation \cite{BY} and
the fourth-order partitioned Runge-Kutta splitting method for the NLDE \cite{BM,BCJY} exhibits 1/2 order uniform convergence under resonant time steps, and the uniform order could be improved to 3/2 under non-resonant time steps. The details are omitted here for brevity.
\end{rmk}

\section{Conclusion}\setcounter{equation}{0}
We studied the super-resolution property of time-splitting methods for the nonlinear Dirac equation in the nonrelativistic regime without magnetic potential in this paper. The uniform and improved uniform error bounds under non-resonant time step sizes for Lie-Trotter splitting ($S_1$) and Strang splitting ($S_2$) were rigorously established.  For $S_1$,  there are two independent error bounds $\tau+\eps$ and $\tau + \tau/\eps$, which give a uniform $1/2$ order convergence. Surprisingly, there is  an improved  uniform first order  convergence if the time step sizes are non-resonant. For $S_2$, the two different error bounds are $\tau^2 + \eps$ and $\tau^2 + \tau^2/\eps^3$, also resulting in a uniform $1/2$ order convergence. For non-resonant time step sizes, the convergence rates can be improved to $3/2$ for $S_2$, with the two independent error bounds  as $\tau^2 + \tau\eps$ and $\tau^2 + \tau^2/\eps$. Numerical results  agreed with our theorems and suggested that our estimates are sharp. We remark that super-resolution also holds true for higher order splitting methods. Moreover, although only 1D cases are presented in this paper, these results are valid in higher dimensions, and the proofs can be easily generalized.

\end{document}